\newtheorem{theorem}{Theorem}[section]
\newtheorem{proposition}[theorem]{Proposition}
\newtheorem{lemma}[theorem]{Lemma}
\newtheorem{remark}[theorem]{Remark}
\newproof{proof}{Proof}
\renewcommand{\ge}{\varepsilon}
\newcommand{\R}{\mathbb{R}}
\newcommand{\C}{\mathbb{C}}
\let\colon\relax
\newcommand{\colon}{\nobreak\mskip2mu\mathpunct{}\nonscript   \mkern-\thinmuskip{:}\mskip5muplus1mu\relax}
\DeclareSymbolFont{AMSb}{U}{msb}{m}{n}
    \DeclareSymbolFontAlphabet{\mathbb}{AMSb}%
\DeclareSymbolFont{rsfs}{U}{rsfs}{m}{n}
\DeclareSymbolFontAlphabet{\mathscr}{rsfs}
\DeclareMathAlphabet{\mathfrak}{U}{euf}{m}{n}
\SetMathAlphabet{\mathfrak}{bold}{U}{euf}{b}{n}
\begin{document}

%\begin{frontmatter}

\title{Semiclassical analysis for pseudo-relativistic Hartree equations}
\author[cin]{Silvia Cingolani}
  \ead{silvia.cingolani@poliba.it}
\author[sec]{Simone Secchi}
	\ead{simone.secchi@unimib.it}

\address[cin]{Dipartimento di Meccanica,
  Matematica e Management, Politecnico di Bari, via Orabona 4, I-70125 Bari}

\address[sec]{Dipartimento di Matematica e Applicazioni, Universit\`a di Milano Bicocca, via Roberto Cozzi 55, I-20125 Milano.}
\date{\today}

\begin{abstract}
In this paper we study the semiclassical limit for the pseudo-relativistic Hartree equation
\begin{displaymath}\label{mainequation}
\sqrt{-\ge^2 \Delta + m^2}u + V u = \left( I_\alpha * |u|^{p} \right)
  |u|^{p-2}u, \quad\hbox{in $\R^N$},
\end{displaymath}
where $m>0$,  $2 \leq p < \frac{2N}{N-1}$, $V \colon \R^N \to \R$ is an external scalar
potential,
$
I_\alpha (x) = \frac{c_{N,\alpha}}{|x|^{N-\alpha}}
$
is a convolution kernel, $c_{N,\alpha}$ is a positive constant and
$(N-1)p-N<\alpha <N$.
For $N=3$, $\alpha=p=2$, our equation  becomes the pseudo-relativistic Hartree equation with Coulomb kernel.
\end{abstract}

\begin{keyword}
Pseudo-relativistic Hartree equations, semiclassical limit
\end{keyword}

\maketitle

%\end{frontmatter}

\section{Introduction}

In this paper we study the semiclassical limit ($\ge \to 0^+$) for the pseudo-relativistic Hartree equation
\begin{equation}\label{eq:1.1}
i \ge \frac{\partial \psi}{\partial t} = \left(\sqrt{- \ge^2 \Delta +m^2} -m \right)\psi  + V \psi  -
\left( \frac{1}{|x|} * |\psi|^2 \right)  \psi, \quad x \in \R^3
\end{equation}
where  $\psi \colon \R \times \R^3 \to \C$ is the wave field, $m>0$ is a physical constant, $\ge$ is the semiclassical parameter $0 < \ge \ll 1$, a dimensionless scaled Planck constant (all other physical constant are rescaled to be $1$), $V$ is bounded external potential in $\R^3$.
Here the pseudo-differential operator $\sqrt{-\ge^2\Delta +m^2}$ is simply defined in Fourier variables by the symbol $\sqrt{\ge^2 |\xi|^2+m^2}$ (see \cite{ll}).

Equation $(\ref{eq:1.1})$ has interesting applications in the quantum theory for large systems of self-interacting, relativistic bosons with mass $m >0$. As recently shown by Elgart and Schlein \cite{es}, equation  $(\ref{eq:1.1})$ emerges as the correct evolution equation for the mean-field dynamics of
many-body quantum systems modelling pseudo-relativistic boson stars in astrophysics. The external potential, $V = V (x)$, accounts for gravitational fields from other stars. In what follows, we will assume that $V$ is a smooth, bounded function (see \cite{ly,fjl,FJL,FJL1,Lenz2,MM}).
The pseudo-relativistic Hartree equation  can be also derived coupling together a pseudo-relativistic Schr\"odinger equation with a Poisson equation (see for instance \cite{AMS,SCV}), i.e.
\[
\left\{
\begin{array}{l}
i \ge \frac{\partial \psi}{\partial t} = \left(\sqrt{- \ge^2 \Delta +m^2} -m \right)\psi +V \psi   - U \psi, \\
-\Delta U = |\psi|^2
\end{array}
\right.
\]
See also \cite{DSS,fl} for recent developments for models involving pseudo-relativistic Bose gases.

Solitary wave solutions $\psi(t,x)= e^{i t \lambda / \ge } u(x)$, $\lambda >0$ to equation $(\ref{eq:1.1})$
lead to solve the non local single equation
 \begin{equation}\label{psedorelativistichartree}
\sqrt{- \ge^2 \Delta +m^2 } u + V u  = \left( \frac{1}{|x|} * |u|^2 \right) u, \quad\hbox{in $\R^3$}
\end{equation}
where for simplicity we write $V$ instead of $V + (\lambda -m)$.

More generally, in this paper we will study the generalized pseudo-relativistic Hartree equation
\begin{equation} \label{eq:1generalizzato}
\sqrt{-\ge^2 \Delta + m^2}u + Vu = \left( I_\alpha * |u|^{p} \right)
  |u|^{p-2}u, \quad\hbox{in $\R^N$},
\end{equation}
where $m>0$,  $2 \leq p <
\frac{2N}{N-1}$, $V \colon \R^N \to \R$ is an external scalar
potential,
\[
I_\alpha (x) = \frac{c_{N,\alpha}}{|x|^{N-\alpha}} \quad (x \neq 0), \quad \alpha \in (0,N)
\]
is a convolution kernel and $c_{N,\alpha}$ is a positive constant; for
our purposes we can choose $c_{N,\alpha}=1$.
For $N=3$, $\alpha=p=2$, equation \textcolor{blue}{$(\ref{eq:1generalizzato})$}
becomes the pseudo-relativistic Hartree equation $(\ref{psedorelativistichartree})$ with Coulomb kernel.

We refer to \cite{ww,CSS,CCS1,MVS} for the semiclassical analysis of the non-relativistic Hartree equation.
The study of the pseudo-relativistic Hartree  equation $(\ref{psedorelativistichartree})$ without external potential $V$
starts in the pioneering paper \cite{ly} where Lieb and Yau,
by minimization on the sphere $\{\phi \in L^2(\R^3) \mid \int_{\mathbb{R}^3} |\phi|^2 =M \}$, proved that a radially symmetric ground state exists in $H^{1/2}(\R^3)$ whenever $M<M_c$, the so-called Chandrasekhar mass. Later Lenzmann proved in \cite{Lenz} that this ground state is unique (up to translations and phase change) provided that the mass $M$ is sufficiently small; some results about the non-degeneracy of the ground state solution are also given.

Successively, in \cite{CZN1} Coti-Zelati and Nolasco proved existence of a positive radially symmetric ground state solution for a pseudo-relativistic Hartree equation without external potential $V$, involving a more general radially symmetric convolution kernel. See the recent paper \cite{CZN2} dealing existence of ground states with given fixed ``mass-charge".

In \cite{MeZo} Melgaard and Zongo established that $(\ref{psedorelativistichartree})$  has a sequence of radially symmetric solutions of higher and higher energy, assuming that $V$ is radially symmetric potential.

The requirement that $V$ has radial symmetry was dropped in the recent paper \cite{CS14}, where a positive ground state solution for
the pseudo-relativistic Hartree equation    \textcolor{blue}{$(\ref{eq:1generalizzato})$} is constructed under the assumption $(N-1) p -N < \alpha < N$.

To the best of our knowledge the study of the semiclassical limit for the pseudo-relativistic Hartree equation has been considered by Aki, Markowich and Sparber in \cite{AMS}. Using Wigner trasformation techniques, they showed that its semiclassical limit
yields the well known relativistic Vlasov-Poisson system.

In the present paper we are interested to study the pseudo-relativistic Hartree equation in the semiclassical limit regime $(0< \ge \ll 1)$, using variational methods.
Replacing $u (y)$ by $\ge^{\frac{\alpha}{2(1-p)}} u(\ge y)$, equation (\ref{eq:1generalizzato}) becomes equivalent to following Hartree equation	
\begin{equation} \label{eq:1bis}
\sqrt{- \Delta + m^2}  u   + V_\ge(y)  u = \left( I_\alpha * |u|^{p} \right)
	| u|^{p-2}  u, \quad\hbox{in $\R^N$}.
\end{equation}
where $V_\ge (y)=V(\ge y)$.
In what follows we will assume that
\begin{description}
\item[(V)] $V \colon \R^N \to \R$ is a continuous and bounded function
  such that $V_{\mathrm{min}} = \inf_{\R^N} V >-m$ and there
  exists a bounded open set $O \subset R^N$ with the property that
\[
V_0 = \inf_{O} V < \min_{\partial O} V.
\]
\end{description}
Let us define
\[
\mathscr{M}= \left\{ y \in O \mid V(y)= V_0 \right\}.
\]

We will establish the existence of a single-spike solution concentrating around a point close to
$\mathscr{M}$. Precisely, our main result is the following.

\begin{theorem} \label{th:main}
Retain assumption (\textbf{V}) and
  assume that $2 \leq p < 2N/(N-1)$ and $(N-1)p-N<\alpha <N$. Then, for every sufficiently small
  $\ge>0$, there exists a solution $u_\ge \in H^{1/2}(\R^N)$
  of equation $(\ref{eq:1bis})$ such that $u_\ge$ has a local maximum point
  $y_\ge$ satisfying
\[
\lim_{\ge \to 0} \mathrm{dist}(\ge y_\ge,\mathscr{M})=0,
\]
and for which
\[
u_\ge(y) \leq C_1 \exp \left( -C_2 |y-  y_\ge| \right)
\]
for suitable constants $C_1>0$ and $C_2>0$. Moreover, for any sequence
$\{\ge_n\}_n$ with $\ge_n \to 0$, there exists a
subsequence, still denoted by the same symbol, such that there exist a
point $y_0 \in \mathscr{M}$ with $\ge_n  y_{\ge_n} \to y_0$, and a positive least-energy solution $U \in H^{1/2}(\mathbb{R}^N)$ of
the equation
\[
\sqrt{-\Delta + m^2} \ U + V_0 U = \left( I_\alpha * U^{p} \right) U^{p-1}
\]
for which we have
\begin{equation} \label{eq:26}
u_{\ge_n}(y) = U \left(y- y_{\ge_n}\right) + \mathscr{R}_n(y)
\end{equation}
where $\lim_{n \to +\infty} \|\mathscr{R}_n\|_{H^{1/2}} =0$.
\end{theorem}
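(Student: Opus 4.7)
The plan is to adapt the local mountain-pass penalization scheme of del Pino and Felmer to the nonlocal pseudo-relativistic setting. As a preliminary step I would fix, for each constant $a > -m$, the autonomous functional
\[
\mathscr{E}_a(u)=\frac{1}{2}\int_{\R^N}\!\left(u\sqrt{-\Delta+m^2}\,u + a u^2\right)dy-\frac{1}{2p}\int_{\R^N}\!(I_\alpha*|u|^p)|u|^p\,dy
\]
on $H^{1/2}(\R^N)$, whose positive ground states are constructed in \cite{CS14}. Let $E(a)$ denote its mountain-pass (equivalently Nehari) level. By Schwarz symmetrization and a soft comparison argument one verifies that $a\mapsto E(a)$ is continuous and strictly increasing on $(-m,+\infty)$, a fact that will be crucial to localize the concentration point.

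The heart of the argument is a Del Pino--Felmer type truncation of the nonlinearity. I would fix a smooth bounded neighbourhood $\Omega$ of $\mathscr{M}$ with $\overline{\Omega}\subset O$ and, outside $\Lambda_\ge := \Omega/\ge$, replace the Hartree term by a cut-off version dominated by the linear part $V_\ge+m$. The modified functional $J_\ge$ then satisfies the mountain-pass geometry and the Palais--Smale condition, yielding a positive critical point $u_\ge$. Comparing the mountain-pass level of $J_\ge$ with $E(V_0)$, via a test function supported in $\Lambda_\ge$ built from a ground state of $\mathscr{E}_{V_0}$, produces the uniform upper bound $\limsup_{\ge\to 0}J_\ge(u_\ge)\le E(V_0)$.

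The main obstacle is the concentration analysis. Let $y_\ge$ be a global maximum point of $u_\ge$ and set $v_\ge(y)=u_\ge(y+y_\ge)$. A nonlocal Lions-type concentration-compactness argument in $H^{1/2}$, combined with a Brezis--Lieb splitting for the Riesz convolution, shows that along a subsequence $v_\ge \rightharpoonup U\ne 0$ with $U$ a ground state of $\mathscr{E}_{V(y_0)}$, where $y_0$ is the limit of $\ge y_\ge$. The strict monotonicity of $E(\cdot)$ together with the upper energy bound then forces $V(y_0)=V_0$, i.e.\ $y_0\in\mathscr{M}$; in particular $\ge y_\ge\in\Omega$ for small $\ge$, and the matching of energies upgrades weak to strong convergence $v_\ge\to U$ in $H^{1/2}$. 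The delicate technical step, due to the nonlocality, is to handle cut-offs of $U$ in the competitor construction and the splitting of $u_\ge$ near $\partial\Lambda_\ge$; I would control the commutators between $\sqrt{-\Delta+m^2}$ and smooth cut-offs either by direct Fourier estimates or, more flexibly, through the Caffarelli--Silvestre type extension to the half-space.

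Finally, a Moser iteration applied to the equation for $v_\ge$ produces uniform $L^\infty$ bounds, and a supersolution of the form $C e^{-\gamma|y|}$ for $\sqrt{-\Delta+m^2}+V_0/2$ outside a large ball (built from the Bessel-type fundamental solution of this operator) yields the exponential decay $u_\ge(y)\le C_1\exp(-C_2|y-y_\ge|)$. Because this decay is uniform in $\ge$, the penalized nonlinearity coincides with the genuine Hartree nonlinearity at $u_\ge$ for all sufficiently small $\ge$, so $u_\ge$ actually solves $(\ref{eq:1bis})$. The expansion $(\ref{eq:26})$ with $\|\mathscr{R}_n\|_{H^{1/2}}\to 0$ is then merely the strong convergence $v_\ge\to U$ rewritten in the original frame.
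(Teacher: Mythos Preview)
Your outline is coherent and would lead to the result, but it follows a genuinely different route from the paper. You propose a del Pino--Felmer scheme, truncating the Hartree nonlinearity outside $\Lambda_\ge$ and running a global mountain-pass on the modified functional. The paper instead uses the Byeon--Jeanjean approach: it leaves the nonlinearity untouched and adds an \emph{external} penalty
\[
Q_\ge(v)=\Bigl(\int_{\R^N}\chi_\ge\,\gamma(v)^2\,dy-1\Bigr)_+^{(2p+1)/2},
\]
then seeks critical points of $\Gamma_\ge=\mathscr{E}_\ge+Q_\ge$ in a small tubular neighbourhood $X_\ge^d$ of a prescribed set $X_\ge$ of truncated, translated ground states. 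This sidesteps the issue you leave vague, namely how to ``cut off'' a convolution term pointwise while retaining a clean variational structure; in exchange the paper must prove (Proposition~\ref{prop:comp}) that the set $S_a$ of ground states is \emph{compact} in $H$, which replaces the uniqueness of ground states (unavailable here for general $p$) and is what makes the deformation argument near $X_\ge^d$ work. A second difference is that the paper carries out the entire analysis in the extended space $H^1(\R_+^{N+1})$ via the Dirichlet-to-Neumann realization, so the commutator estimates between $\sqrt{-\Delta+m^2}$ and cut-offs that you flag as delicate become ordinary gradient computations on the extension. Your approach is closer in spirit to the semiclassical Choquard literature and, once the truncation is made precise, is arguably more streamlined; the paper's approach trades that difficulty for a heavier compactness lemma and a two-layer (in $R$, then in $\ge$) limiting procedure.
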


\bigskip
To prove the main result,
 we replace the \emph{nonlocal}
problem (\ref{eq:1generalizzato}) in $\R^N$ with a local Neumann problem in the
half space $\R_{+}^{N+1}$ as in \cite{CZN1} (see \cite{Caff}).
We will find critical points of the Euler functional associated to the local Neumann problem by means of a variational approach introduced in \cite{byeonjeanjean, byeonjeanjean1} (see also \cite{CJS}) for nonlinear Schr\"odinger equations and extended in \cite{CSS} to deal with non-relativistic Hartree equations.

In the present paper the presence of a pseudo-differential operator combined with a nonlocal term requires new ideas. As a first step, we need to perform a deep analysis of the local realization of the following limiting problem
\begin{equation} \label{eq:5zz}
\sqrt{-\Delta + m^2} u + a u = \left( I_\alpha * |u|^p \right)|u|^{p-2} u
\end{equation}
with~$a>-m$. This equation does not have a unique (up to translation) positive, ground state solution, apart from the case $p=2$, $N=3$.
Nevertheless we can prove that the set of positive, ground state solutions to the local realization of equation $(\ref{eq:5zz})$
satisfies  some compactness properties. This is the crucial tool for finding single-peak solutions which are close to a set of prescribed functions. Even if we use a purely variational approach, we will take into account the shape  and  the location
of the expected solutions as in the reduction methods.

Recently the existence of a spike-pattern solution for fractional nonlinear Schr\"odinger equation has been  proved by
Davila, del Pino and Wei in the semiclassical limit regime (see \cite{daviladelpinowei}).
The authors perform a refined Lyapunov-Schmidt reduction, taking into advantage the fact that
the limiting fractional problem has an unique,  positive, radial, ground state solution, which is nondegenerate.

\begin{center}
\textbf{Notation}
\end{center}
\begin{itemize}
\item We will use $|\cdot|_q$ for the norm in $L^q$, and
  $\|\cdot\|$ for the norm in $H^1(\mathbb{R}_{+}^{N+1})$.
\item Generic positive constants will be denoted by the (same) letter
  $C$.
\item The symbol $\R_{+}^{N+1}$ denotes the half-space $\{ (x,y) \mid
  x>0, \ y \in \R^N\}$. We will identify the boundary~$\partial
  \R_{+}^{N+1}$ with $\R^N$.
\item The symbol $*$ will denote the convolution of two functions.
\item For any subset $A$ of $\R^N$ and any
  $\varrho>0$, we set $A^\varrho = \left\{ y \mid \mbox{dist}(y,A) \leq \varrho \right\}$.

\item For any subset $A$ of $\R^N$ and any
  $\varrho>0$, we set $A_\varrho = \left\{ y \mid \varrho y \in A \right\}$.
\end{itemize}

\section{Preliminaries and variational setting}

The realization of the operator $\sqrt{m^2-\ge^2\Delta}$ in Fourier
variables seems not convenient for our purposes. Therefore, we prefer to
make use of a \emph{local} realization (see \cite{CZN1,Caff}) by
means of the \emph{Dirichlet-to-Neumann} operator defined as
follows.

For any $\ge >0$,  given $u \in \mathcal{S}(\R^N)$,
the Schwartz space of rapidly decaying smooth functions defined on
$\R^N$, there exists one and only one function $v \in
\mathcal{S}(\R^{N+1}_{+})$ such that
{\begin{displaymath} %\label{ugx}
\left\{
\begin{array}{ll}
- \ge^2 \Delta v + m^2 v =0 &\hbox{in $\R^{N+1}_{+}$}\\
v(0,y)=u(y) &\hbox{for $y \in \R^N = \partial
\R^{N+1}_{+}$.}
\end{array}
\right.
\end{displaymath}
Setting
\[
T_\ge u(y)=- \ge \frac{\partial v}{\partial x}(0,y),
\]
we easily see that the problem
{\begin{displaymath} %\label{ug}
\left\{
\begin{array}{ll}
- \ge^2 \Delta w + m^2 w =0 &\hbox{in $\R^{N+1}_{+}$}\\
w(0,y)=T_\ge u(y) &\hbox{for $y \in \partial \R^{N+1}_{+} = \{ 0\}
\times \R^N \simeq \R^N$}
\end{array}
\right.
\end{displaymath}
is solved by $w(x,y)=- \ge \frac{\partial v}{\partial
x}(x,y)$. From this we deduce that
\[
T_\ge (T_\ge u)(y)=- \ge \frac{\partial w}{\partial x}(0,y) = \ge^2 \frac{\partial^2
v}{\partial x^2}(0,y) = \left(- \ge^2 \Delta_y v + m^2 v \right)(0,y),
\]
and hence $T_\ge \circ T_\ge = (- \ge^2 \Delta_y +m^2)$, namely $T_\ge$ is a square
root of the Schr\"{o}dinger operator $-\ge^2 \Delta_y + m^2 $ on
$\R^{N}=\partial \R_{+}^{N+1}$.

\vspace{10pt}

From the previous construction, we can replace the \emph{nonlocal}
problem $(\ref{eq:1generalizzato})$
 in $\R^N$ with the local Neumann problem in the
half space $\R_{+}^{N+1}$
{\begin{displaymath} %\label{eq:3bis}
\left\{
\begin{array}{ll}
- \ge^2 \Delta v (x,y) + m^2 v(x,y)  =0 &\hbox{in $\R_{+}^{N+1}$} \\
-\ge \frac{\partial v}{\partial x}(0,y) = -V(y)v (0,y) +
\left( I_\alpha * |v(0,\cdot)|^{p} \right) |v(0,y)|^{p-2}v(0,y)
&\hbox{for $y \in \R^N$}.
\end{array}
\right.
\end{displaymath}
}
Setting  $v_\ge(x,y)=\ge^{\frac{\alpha}{2(1-p)}} 
v(\ge x, \ge y)$ and 
$V_\ge (y)=V(\ge y)$,
 we are led to the \emph{local}
boundary-value problem
\begin{displaymath}
%\label{eq:3ter}
\left\{ \begin{array}{ll}
  -\Delta v_\ge + m^2 v_\ge = 0 &\hbox{in $\R_{+}^{N+1}$}\\
  -\frac{\partial v_\ge}{\partial x}(0,y)= -V_\ge(y)v_\ge(0,y) + \left( I_\alpha * |v_\ge(0,\cdot)|^{p} \right) |v_\ge(0,y)|^{p-2}v_{\ge}(0,y) &\hbox{for $y \in \R^N$}.
\end{array}
\right.
\end{displaymath}
We introduce the Sobolev space $H=H^1(\R_{+}^{N+1})$, and recall that
there is a continuous \emph{trace operator} $\gamma \colon H \to
H^{1/2}(\R^N)$. Moreover, this operator is surjective and the
inequality
\[
|\gamma (v)|_p^p \leq p |v|_{2(p-1)}^{p-1} \left| \frac{\partial v}{\partial x} \right|_2
\]
holds for every $v \in H^1(\R_{+}^{N+1})$: we refer to \cite{Tartar} for basic facts about the Sobolev space $H^{1/2}(\mathbb{R}^N)$ and the properties of the trace operator.

Reasoning as in \cite[Page 5]{CS14} and taking the
Hardy-Littlewood-Sobolev inequality (see \cite[Theorem 4.3]{LiebLoss}) into consideration,
%and taking~(\ref{eq:3}) into
%consideration,
it follows easily that the functional $\mathscr{E}_\ge
\colon H \to \R$ defined by
{\setlength\arraycolsep{2pt}
\begin{eqnarray*} %\label{eq:4}
\mathscr{E}_\ge (v) &=& \frac{1}{2} \int_{\R^{N+1}_{+}} |\nabla v|^2 \, dx\, dy  +\frac{m^2}{2} \int_{\R^{N+1}_{+}} v^2 \, dx\, dy \nonumber\\
&&{}+ \frac{1}{2} \int_{\R^N} V_\ge(x) \gamma(v)^2 \, dy -\frac{1}{2p } \int_{\R^N} \left( I_\alpha * |\gamma (v)|^p \right) |\gamma (v)|^p \, dy
\end{eqnarray*}
}
%{\setlength\arraycolsep{2pt}
%\begin{eqnarray*} %\label{eq:4}
%\mathscr{E}_\ge (v) &=& \frac{1}{2} \int_{\R^{N+1}_{+}} |\nabla v|^2 \, dx\, dy  +\frac{m}{2} \int_{\R^N} \gamma(v)^2 \, dy \nonumber\\
%&&{}+ \frac{1}{2} \int_{\R^N} V_\ge \gamma(v)^2 \, dy -\frac{1}{2p } \int_{\R^N} \left( I_\alpha * |\gamma (v)|^p \right) |\gamma (v)|^p \, dy
%\end{eqnarray*}
%}
is of class $C^1$, and its critical points are (weak) solutions
to problem \textcolor{blue}{$(\ref{eq:1bis})$.}

\section{Compactness properties for the limiting problem}

For $a>-m$, the equation
\begin{equation} \label{eq:5}
\sqrt{-\Delta + m^2} u + a u = \left( I_\alpha * |u|^p \right)|u|^{p-2} u
\end{equation}
plays the r\^ole of a limiting problem for (\ref{eq:1bis}). Its Euler
functional $L_a \colon H \to \R$ is defined (via the local realization of Section 2) by
\begin{multline*}
  L_a (v) = \frac{1}{2} \int_{\R_{+}^{N+1}} \left( |\nabla v|^2 + m^2 |v|^2 \right) \, dx \, dy \\
  {}+ \frac{a}{2} \int_{\R^N} |\gamma (v)|^2 \, dy - \frac{1}{2p} \int_{\R^N} \left( I_\alpha * |\gamma (v)|^p \right) |\gamma (v)|^p \, dy.
\end{multline*}
We define the ground-state level
\begin{displaymath}
m_a = \inf \left\{ L_a (v) \mid L'_a(v)=0, v \in H \setminus \{0\} \right\}
\end{displaymath}
and the set $S_a$ of elements $v \in H \setminus \{0\}$ such that
$v > 0$,
$L_a(v)=m_a$, and for every $x \geq 0$:
\begin{equation} \label{eq:24}
\max_{y \in \R^N} v(x,y) = v(x,0).
\end{equation}
\begin{proposition}
The set $S_a$ is non-empty for any $a>-m$.
\end{proposition}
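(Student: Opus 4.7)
The plan is to produce a $v\in S_a$ by exploiting the translation- and rotation-invariance of $L_a$ in the $y$-variable: first obtain a nonnegative minimizer in the radial (in $y$) subspace, then rearrange the trace to be radially non-increasing, and finally use the harmonic-extension structure to propagate the maximum at the origin to all heights $x\ge 0$.

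\textbf{Step 1 (Existence of a radial minimizer).} Since $a>-m$, the quadratic part
$Q(v)=\int_{\R_{+}^{N+1}}(|\nabla v|^2+m^2v^2)\,dx\,dy+a\int_{\R^N}|\gamma(v)|^2\,dy$
is equivalent to $\|v\|^2$ (trace inequality), and since $p>1$ the functional $L_a$ has mountain-pass geometry, so on the Nehari manifold $\mathcal{N}_a=\{v\neq 0\mid\langle L_a'(v),v\rangle=0\}$ one has $m_a=\inf_{\mathcal{N}_a}L_a>0$. Compactness is lost because $L_a$ is invariant under $y$-translations, so I restrict to the subspace $H_r\subset H$ of functions radial in $y$. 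A Strauss-type argument gives a compact trace embedding $\gamma(H_r)\hookrightarrow L^q(\R^N)$ for $2<q<\frac{2N}{N-1}$; combined with the Hardy-Littlewood-Sobolev inequality (valid in the range $(N-1)p-N<\alpha<N$), this makes the nonlocal term weakly continuous on $H_r$. A bounded minimizing sequence on $\mathcal{N}_a\cap H_r$ then converges weakly, and strongly at the trace level, to some $v_r\ne 0$ attaining the infimum; by the principle of symmetric criticality (Palais), $v_r$ is a critical point of $L_a$ on all of $H$.

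\textbf{Step 2 (Positivity and radial monotonicity).} Replacing $v_r$ by $|v_r|$ leaves the nonlocal term unchanged and does not increase the other terms, so we may take $v_r\geq 0$. Let $u^{*}$ be the Schwarz rearrangement of $u:=\gamma(v_r)$ in $y$, and let $v^{*}\in H$ solve $-\Delta v^{*}+m^2 v^{*}=0$ with $\gamma(v^{*})=u^{*}$. The Pólya-Szegő inequality for the $H^{1/2}$-seminorm (Almgren-Lieb) gives that the kinetic part of $L_a(v^{*})$ does not exceed that of $L_a(v_r)$, and the Riesz rearrangement inequality makes the Hartree term no smaller, so $L_a(v^{*})\leq L_a(v_r)=m_a$; after projecting onto $\mathcal{N}_a$ we obtain a minimizer, still denoted $v$, whose trace $u$ is radial and non-increasing. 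From the Neumann condition
\[
-\partial_x v(0,y)+a\,u(y)=\bigl(I_\alpha*u^p\bigr)u^{p-1}(y)\ge 0,
\]
the strong maximum principle for $-\Delta+m^2$ in $\R_{+}^{N+1}$ and Hopf's lemma at $\partial\R_{+}^{N+1}$ give $v>0$ everywhere in $\overline{\R_{+}^{N+1}}$.

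\textbf{Step 3 (Pointwise maximum at $y=0$).} Since $v$ solves $-\Delta v+m^2 v=0$ in $\R_{+}^{N+1}$, it admits the Poisson representation
\[
v(x,y)=\int_{\R^N}K(x,y-z)\,u(z)\,dz,
\]
where $K(x,\cdot)$ is the Poisson-type kernel of $-\Delta+m^2$ on the half-space: via its Fourier symbol (or an explicit computation through the Bessel-type kernel) one checks that $K(x,\cdot)$ is positive, radial, and radially non-increasing in $y$ for every $x>0$. Since the convolution of two radial non-increasing functions on $\R^N$ is again radial and non-increasing (Riesz rearrangement applied to characteristic functions plus the layer-cake representation), $v(x,\cdot)$ is radial and non-increasing in $y$ for every $x\ge 0$, so $\max_{y\in\R^N}v(x,y)=v(x,0)$. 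Thus $v\in S_a$.

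The principal obstacle is the loss of compactness coming from $y$-translation invariance; the symmetric-criticality reduction to $H_r$ resolves it, but must be coupled carefully with Hardy-Littlewood-Sobolev to control the nonlocal term, and with the separate Schwarz-rearrangement step that is needed in order to promote radial symmetry of the trace to radial monotonicity, so that the Poisson-extension argument can deliver the pointwise maximum property at $y=0$ throughout the half-space.
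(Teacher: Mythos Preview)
Your argument is essentially correct but follows a genuinely different path from the paper. The paper passes to the trace functional $\mathcal{L}_a$ on $H^{1/2}(\mathbb{R}^N)$ via \cite[Lemma 2.1]{CZN2}, minimizes the homogeneous Rayleigh quotient~(\ref{eq:37}) and invokes \cite{MVS13} for attainment (where the loss of compactness is handled by concentration--compactness rather than by a radial restriction); radial symmetry of ground states is obtained afterwards by a polarization argument, and the verification of~(\ref{eq:24}) is left implicit. Your route stays in the extended space~$H$, recovers compactness by the Strauss embedding on the radial sector together with Palais' principle, and then explicitly secures~(\ref{eq:24}) through the Poisson representation of the $m$-harmonic extension. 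The payoff of your approach is that it is more self-contained and makes the pointwise maximum condition transparent; the paper's approach is shorter because it outsources both the attainment and the symmetry to existing references.

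One point in your write-up deserves tightening. In Step~1 you produce a minimizer $v_r$ of $L_a$ on $\mathcal{N}_a\cap H_r$, and in Step~2 you write $L_a(v_r)=m_a$; but Step~1 only yields $L_a(v_r)=\inf_{\mathcal{N}_a\cap H_r}L_a$, and the identification with the full Nehari level $m_a$ has not yet been justified. The missing line is to apply your Schwarz-rearrangement/Riesz/P\'olya--Szeg\H{o} argument to an \emph{arbitrary} $w\in\mathcal{N}_a$ (not just to $v_r$): this gives $\max_{t>0}L_a(tw^{*})\le L_a(w)$ with $tw^{*}\in\mathcal{N}_a\cap H_r$, hence $\inf_{\mathcal{N}_a\cap H_r}L_a\le m_a$ and therefore equality. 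Once this is inserted, your Step~2 legitimately produces a ground state whose trace is radially non-increasing, and Step~3 goes through as written (the kernel $K(x,\cdot)=\mathcal{F}^{-1}\big(e^{-x\sqrt{|\xi|^2+m^2}}\big)$ is indeed positive, radial and non-increasing in~$y$ for each $x>0$).
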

\begin{proof}
The proof is indeed standard, and we will be sketchy. 
First of all, we invoke \cite[Lemma 2.1]{CZN2} to deduce that ground states of $L_a$ correspond to ground states of the functional $\mathcal{L}_a \colon H^{1/2}(\mathbb{R}^N) \to \mathbb{R}$ defined as
\begin{multline} \label{eq:38}
\mathcal{L}_a(u) =
\frac{1}{2}\int_{\mathbb{R}^N} \left(\left| \sqrt{\left( m^2-\Delta \right)^{1/2}-m}u \right|^2 + (a+m)|u|^2 \right) \\
{}-\frac{1}{2p} \int_{\mathbb{R}^N} \left( I_\alpha * |u|^p \right) |u|^p.
\end{multline}
We claim that $\mathcal{L}_a$ possesses a ground state.
We fix $a>-m$ and consider the minimization problem associated to (\ref{eq:38})
\begin{equation} \label{eq:37}
\widetilde{m}_a = 
\inf_{u\in H^{1/2}(\mathbb{R}^N) \setminus \{0\}} 
\frac{\int_{\mathbb{R}^N} |\sqrt{ (m^2-\Delta)^{1/2}-m } u|^2+ \left( a+m \right) |u|^2}{\left( \int_{\mathbb{R}^N} \left( I_\alpha * |u|^p \right) |u|^p \right)^{\frac{1}{p}}}.
\end{equation}
Since \(\sqrt{m^2-\Delta}-m>0\)
in the sense of functional calculus and $a+m>0$, it follows easily that $\widetilde{m}_a>0$. As in \cite[Proof of Proposition 2.2]{MVS13} we can show that $\widetilde{m}_a$ is attained.
Since the quotient in (\ref{eq:37}) is homogeneous of degree zero, as in the local case we see that any minimizer of
$\widetilde{m}_a$ is, up to a rescaling and a translation, a ground state for (\ref{eq:38}). Therefore the claim is proved, and in particular $S_a \neq \emptyset$. It is easy to check that ground states are non-negative, and, as in \cite[Theorem 5.1]{CZN1}, actually strictly positive. 
\end{proof}
\begin{remark}
By \cite[Formula (A.3)]{ly}, the quotient to be 
minimized in (\ref{eq:37}) decreases under polarization. 
This implies, reasoning as in \cite[Section 5]{MVS13} (see also \cite{DaSS}) that 
ground states are radially symmetric around a point of $\mathbb{R}^N$.
\end{remark}
For $U \in S_a$, we write $E_a = L_a(U)$. By an immediate extension of
\cite[Lemma 3.17]{R}, the map $a \mapsto E_a$ is
strictly increasing and continuous.  The following is the main result of this
section.
\begin{proposition} \label{prop:comp}
The set $S_a$ is compact in $H$,
  and for some~$C>0$ and any~$\sigma \in (-V_{\mathrm{min}},m) \cap [0,+\infty)$ we have
\begin{equation} \label{eq:27}
v(x,y) \leq C e^{-(m-\sigma)\sqrt{x^2+|y|^2}} e^{-\sigma x}
\end{equation}
for every $v \in S_a$.
\end{proposition}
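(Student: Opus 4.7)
My plan is to take an arbitrary sequence $\{v_n\}\subset S_a$. From $L_a(v_n)=m_a$ and $L_a'(v_n)=0$, the Nehari identity $L_a'(v_n)[v_n]=0$ gives a uniform bound on $\|v_n\|$. By the preceding Remark each $\gamma(v_n)$ is radial around some point of $\R^N$, and condition (\ref{eq:24}) forces that point to be the origin, so $v_n$ itself is radial in $y$. Extracting a weakly convergent subsequence $v_n\rightharpoonup v$ in $H$, the Strauss-type compact embedding of radial $H^{1/2}$-functions into $L^q(\R^N)$ for $q=2pN/(N+\alpha)$, which is subcritical thanks to $(N-1)p-N<\alpha$, yields $\gamma(v_n)\to\gamma(v)$ strongly in this $L^q$; the Hardy--Littlewood--Sobolev inequality then gives convergence of the nonlocal energy. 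The Nehari identity once more forces $\|v_n\|\to\|v\|$, which combined with weak convergence in the Hilbert space $H$ upgrades to strong convergence. Since $m_a>0$, $v\not\equiv 0$, and pointwise convergence preserves positivity and property (\ref{eq:24}), so $v\in S_a$.

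\textbf{Exponential decay.} I would first upgrade $v\in H$ to $v\in L^\infty(\R^{N+1}_+)$ by a Brezis--Kato / Moser iteration on the semilinear Neumann problem, and then use the radiality of $\gamma(v)$ together with a Strauss pointwise estimate to show $v(x,y)\to 0$ as $x^2+|y|^2\to\infty$. As a barrier I would take
\[
w_\sigma(x,y)=e^{-(m-\sigma)\sqrt{x^2+|y|^2}}\,e^{-\sigma x}.
\]
A direct computation, using $\Delta\sqrt{x^2+|y|^2}=N/\sqrt{x^2+|y|^2}$ together with $x/\sqrt{x^2+|y|^2}\leq 1$, yields the identity
\[
\frac{(-\Delta+m^2)w_\sigma}{w_\sigma} = (m-\sigma)\!\left[\frac{N}{\sqrt{x^2+|y|^2}}+2\sigma\!\left(1-\frac{x}{\sqrt{x^2+|y|^2}}\right)\right]\!\geq 0
\]
throughout $\R^{N+1}_+$ for every $\sigma\in[0,m)$, while on the flat boundary $-\partial_x w_\sigma(0,y)=\sigma\,w_\sigma(0,y)$. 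Because $\gamma(v)\to 0$ at infinity, the superlinear nonlocal term in the Neumann condition for $v$ is negligible for $|y|$ large, and the hypothesis $\sigma>-V_{\mathrm{min}}$ provides precisely the slack needed to dominate the remaining linear coefficient. Working in the exterior region $\{\sqrt{x^2+|y|^2}\geq R\}\cap\R^{N+1}_+$ for $R$ large, I would choose $C$ so that $v\leq Cw_\sigma$ on the cap $\{\sqrt{x^2+|y|^2}=R\}$ (possible since $v$ is bounded while $w_\sigma$ has a uniform positive lower bound there) and conclude $v\leq Cw_\sigma$ on the entire exterior by the strong maximum principle combined with Hopf's lemma; this rules out both interior and boundary maxima of $v-Cw_\sigma$ and gives (\ref{eq:27}), after absorbing the bounded region $\{\sqrt{x^2+|y|^2}\leq R\}$ into the constant $C$.

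\textbf{Main obstacle.} The delicate point is the comparison on the flat Neumann boundary: classical maximum principles are phrased for Dirichlet data, whereas here the strong maximum principle in the open half-space must be coupled with Hopf's lemma at $x=0$, the strict inequality $\sigma+V_{\mathrm{min}}>0$ being exactly what is required to absorb the linearised Neumann coefficient once the nonlocal term has been rendered negligible by the pointwise decay of $\gamma(v)$ at infinity. A secondary technical step is the $L^\infty$-bound and pointwise decay of $v$, which although standard for local Schrödinger-type equations requires care here because of the interaction between the half-space interior problem and the Hartree-type boundary nonlinearity.
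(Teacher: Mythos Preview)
Your approach is essentially correct but takes a genuinely different route from the paper's.

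\textbf{Compactness.} You exploit the preceding Remark on radiality and invoke a Strauss-type compact embedding $H^{1/2}_{\mathrm{rad}}(\R^N)\hookrightarrow L^{q}(\R^N)$ with $q=2pN/(N+\alpha)$ to pass to the limit in the nonlocal term. The paper does \emph{not} use radiality anywhere. Instead it (i) obtains a uniform $L^\infty$ bound by integrating $v$ in $x$ to reduce to a Dirichlet problem and invoking elliptic regularity; (ii) proves \emph{uniform} decay of $v$ at infinity by a two-bump energy contradiction (if the decay were not uniform one would build two disjoint nontrivial solutions and force $m_a\geq 2m_a$); (iii) splits $I_\alpha=I_\alpha^1+I_\alpha^2$ with $I_\alpha^1\in L^r$, $I_\alpha^2\in L^\infty$, and uses Hardy--Littlewood--Sobolev to show $\mathscr{N}(v_n)\to\mathscr{N}(v)$. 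Your argument is shorter and more elegant, but it relies on the radiality Remark as a black box, needs $N\geq 2$ (Strauss compactness fails in dimension one), and tacitly uses that condition~(\ref{eq:24}) pins the centre of symmetry to the origin, which in turn requires radial \emph{monotonicity}, not just radial symmetry, of the extension in $y$. The paper's argument is heavier but self-contained and dimension-free.

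\textbf{Uniform decay.} The statement asks for a constant $C$ \emph{uniform over $v\in S_a$}. As written, your barrier argument fixes a single $v$ and chooses $R$ depending on where its nonlinear boundary term becomes small; this yields $C=C(v)$. The gap is easily repaired in your framework: the Strauss pointwise bound $|\gamma(v)(y)|\leq C\|\gamma(v)\|_{H^{1/2}}|y|^{-(N-1)/2}$ is uniform because the ground-state energy fixes $\|\gamma(v)\|_{H^{1/2}}$, so $R$ and hence $C$ can be chosen uniformly. The paper achieves the same uniformity via the two-bump contradiction argument mentioned above, which is then fed into the comparison with the barrier.

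\textbf{Minor points.} For ``the Nehari identity forces $\|v_n\|\to\|v\|$'' to work you need two observations you left implicit: first, that the weak limit $v$ is itself a critical point (so Nehari holds for $v$), which follows from weak convergence plus the strong $L^q$ trace convergence you obtained; second, that $w\mapsto\|w\|_H^2+a\,|\gamma(w)|_2^2$ is an equivalent norm on $H$ (true because $a>-m$ and the sharp trace inequality gives $|\gamma(w)|_2^2\leq m^{-1}\|w\|_H^2$), so convergence of this combined quantity upgrades weak to strong convergence in $H$.
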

\begin{proof}
  If $v \in S_a$, it follows easily from \cite[Theorem 5.1]{CZN1} or
  \cite[Theorem 7.1]{CS14} that $v$ decays exponentially fast at
  infinity and (\ref{eq:27}) holds. Moreover, since
\[
m_a = L_a (v) = \left( \frac{1}{2} - \frac{1}{2p} \right) \left( |\nabla v|_2^2 + m^2 |v|_2^2 \right),
\]
$S_a$ is bounded in $H$. We claim that $S_a$ is also bounded in
$L^\infty(\R_{+}^{N+1})$.

Indeed, 
by \cite[Theorem 3.2]{CZN1} it follows that $\gamma(v) \in L^q (\R^N)$ for any $q \in [2, \infty]$,  
then also 
$g(\cdot) = -a \gamma (v) + \left( I_\alpha * |\gamma(v)|^p \right) |\gamma (v)|^{p-2}\gamma(v) \in L^q(\R^N)$
for $q \in [2, \infty]$. 
Following \cite{CSM}, we let  $u(x,y)=\int_0^x v(t,y)\, dt$. It follows that $u \in H^1((0,R) \times \R^N)$ for all $R>0$.
Arguing as in \cite[Proposition 3.9]{CZN1},  we can deduce that $u$ is a weak solution of the Dirichlet problem
\begin{equation} \label{eq:31}
\left\{ \begin{array}{ll}
-\Delta u + m^2 u = g &\hbox{in $\R_{+}^{N+1}$}\\
u=0 &\hbox{for $y \in \R^N$}.
\end{array}
\right.
\end{equation}
where $g(x,y)= g(y)$ for every $x >0$ and $y \in \R^N$. We sketch the proof for the sake of completeness. Pick an arbitrary function $\eta \in C_0^\infty(\mathbb{R}_{+}^{N+1})$ and write $\omega_t(x,y)=\eta(x+t,y)$ for any $t \geq 0$. Then
\begin{multline*}
\int_0^{+\infty} \int_0^{+\infty} \int_{\mathbb{R}^N} \nabla v(x,y) \cdot \nabla \eta (x+t,y) \, dy \, dx\, dt \\
=\int_0^{+\infty} \int_x^{+\infty} \int_{\mathbb{R}^N} \nabla v(x,y) \cdot \nabla \eta (s,y) \, dy \, ds \, dx 
\\
= \int_0^{+\infty} \int_0^s \int_{\mathbb{R}^N} \nabla v(x,y) \cdot \nabla \eta (s,y) \, dy \, dx \, ds \\
=\int_0^{+\infty} \int_{\mathbb{R}^N} \nabla \left( \int_0^s v(x,y)\, dx \right) \cdot \nabla \eta (s,y) \, dy \, ds
\end{multline*}
and this readily implies that 
\begin{equation*}
\int_{\mathbb{R}_{+}^{N+1}} \left( \nabla v \cdot \nabla w_t + m^2 v w_t \right) \, dx \, dy = \int_{\mathbb{R}^N} g w_t \, dy.
\end{equation*}
An integration with respect to $t$ from $0$ to $+\infty$ gives
\[
\int_{\mathbb{R}^{N+1}_{+}} \left( \nabla u \cdot \nabla \eta + m^2 u\eta - g \eta \right) \, dx \, dy = 0,
\]
and hence the validity of (\ref{eq:31}) is proved.

Moreover for  any given $R>0$ we can define
$u_{\mathrm{odd}} \in H^1((-R,R) \times \mathbb{R}^N)$ and
$g_{\mathrm{odd}} \in \bigcap_{q \geq 2} L^q((-R,R) \times
\mathbb{R}^N)$ by
\begin{displaymath}
u_{\mathrm{odd}} = \left\{
\begin{array}{ll}
 u(x,y) &\hbox{if $x \geq 0$} \\
 -u(-x,y) &\hbox{if $x <0$},
\end{array}
\right.
\qquad
g_{\mathrm{odd}}(x,y)= \left\{
\begin{array}{ll}
 g(y) &\hbox{if $x \geq 0$}\\
 -g(y) &\hbox{if $x<0$}.
\end{array}
\right.
\end{displaymath}
It is easy to check as before that
\[
-\Delta u_{\mathrm{odd}} + m^2 u_{\mathrm{odd}} = g_{\mathrm{odd}}
\quad\hbox{in $\mathbb{R}^{N+1}$}.
\]
Since $g_{\mathrm{odd}} \in L^q((-R,R)\times \R^N)$ for any $q \in [2, +\infty[$, $R>0$, we can invoke standard regularity results to conclude
that
\[
u_{\mathrm{odd}} \in W^{2,q} ((-R,R) \times \mathbb{R}^{N})
\]
for
every $q \geq 2$ and every $R>0$, and hence $u_{\mathrm{odd}} \in
C^{1,\beta}(\mathbb{R}^{N+1})$, $u \in C^{1,\beta}(\mathbb{R}_{+}^{N+1})$ and $v = \frac{\partial u}{\partial x} \in
C^{0,\beta}(\mathbb{R}_{+}^{N+1})$ by Sobolev's Embedding
Theorem. Therefore $g \in C^{0,\beta/(p-1)}(\mathbb{R}^N)$, and
Schauder estimates yield $u \in C^{2,\beta/(p-1)}(\mathbb{R}_{+}^{N+1})$ and $v \in C^{1,\beta/(p-1)}(\mathbb{R}_{+}^{N+1})$.  Moreover, the $C^{1,\beta}$-norm of $v$ can be
estimated by the $L^q$-norm of $g$, which immediately implies that
$S_a$ is a bounded subset of $L^\infty(\mathbb{R}_{+}^{N+1})$.

%%%%%%%%%%%%%%%%%%%%%%%%%%%%%%%%%%%%%%%

%From the bootstrap argument developed in \cite{CZN1} we learn that $g
%\in \bigcap_{q \geq 2} L^q (\R^N)$, and \cite[Proposition 3.9]{CZN1}
%implies that $v \in C^{0,\beta}(\mathbb{R}_{+}^{N+1}) \cap
%W^{1,q}((-R,R) \times \mathbb{R}^N)$ for every $q \geq 2$ and some
%$0<\beta<1$.
%The boundedness of $S_a$ in $H$ now implies the
%boundedness of $S_a$ in $C^{0,\beta}(\R_{+}^{N+1}) \cap
%W^{1,q}((-R,R) \times \mathbb{R}^N)$.
%As in the proof of Theorem 3.14 of
%\cite{CZN1}, $\gamma(v) \in C^{0,\beta}(\R^N)$ and hence $g
%\in C^{0,\beta/(p-1)}(\R^N)$ with a uniform estimate: this implies
%that $S_a$ is bounded in $C^{1,\beta/(p-1)}(\R_{+}^{N+1})$, and proves
%the claim.

%%%%%%%%%%%%%%%%%%%%%%%%%%%%%%%%%%%%%%%%%

Next, we claim that $\lim_{|(x,y)| \to +\infty} v(x,y)=0$
uniformly with respect to $v \in S_a$. We assume by contradiction that
this is false: there exist a number $\delta >0$, a sequence of points
$(x_n,y_n) \in \R_{+}^{N+1}$ and a sequence of elements $v_n \in S_a$
such that $x_n+|y_n| \to +\infty$ but $v_n(x_n,y_n) \geq \delta$ for
every $n$. Let us write $z_n=(x_n,y_n)$, and call
$\tilde{v}_n(z)=v_n(z+z_n)$ for $z=(x,y) \in \R_{+}^{N+1}$. By the
previous arguments, $\{\tilde{v}_n\}_n$ is a bounded sequence in $H
\cap L^\infty(\R_{+}^{N+1})$. Moreover, up to a subsequence, we can
assume that $v_n \rightharpoonup v$, $\tilde{v}_n \rightharpoonup
\tilde{v}$ in $H$ and locally uniformly in $\R_{+}^{N+1}$. As
in \cite[pag. 989]{CSS}, both $v$ and $\tilde{v}$
weakly solve (\ref{eq:5}). We now show that they are non-trivial weak
solutions. The conclusion is obvious for $\tilde{v}$, since
$\tilde{v}_n(0) = v_n(z_n) \geq \delta$, so that $\tilde{v}(0) \geq
\delta$. We consider instead $v$, and remark that \cite[Eq. (3.16)]{CZN1} implies
\[
\sup_{y \in \R^{N}} |v_n(x,y)| \leq C |\gamma(v_n)|_2 \ e^{-mx}
\]
for some universal constant $C>0$. Hence $\delta \leq v_n(z_n) \leq
|\gamma(v_n)|_2 \ e^{-mx_n}$, and the boundedness of $\gamma(v_n)$ in
$L^2$ yields the boundedness of $\{x_n\}_n$ in $\R$. Without loss of
generality, we can assume that $x_n \to \bar{x} \in [0,+\infty)$.
Therefore, by (\ref{eq:24}),
\[
v_n(\bar{x},0) \geq v_n(\bar{x},y_n) \geq v_n(x_n,y_n)+o(1) \geq
\frac{\delta}{2}
\]
by locally uniform convergence, and we conclude that $v$ is also
nontrivial.

Now, for every $n \in \mathbb{N}$,
\[
L_a(v_n) = \left(\frac{1}{2} - \frac{1}{2p} \right) \left(
  \int_{\R_{+}^{N+1}} \left( |\nabla v_n|^2 + m^2 v_n^2 \right)\, dx\,
  dy + a \int_{\R^N} \gamma(v_n)^2 \, dy \right) =m_a,
\]
and
\[
L_a(v) \geq m_a, \quad L_a (\tilde{v})\geq m_a.
\]
If $R>0$ satisfies $2R \leq x_n + |y_n|$, then
{\setlength\arraycolsep{2pt}
\begin{eqnarray*}
  m_a &=& L_a(v_n) \\
  &\geq& \left( \frac{1}{2} - \frac{1}{2p} \right) \liminf_{n \to +\infty} \int_{B(0,R)} \left( |\nabla v_n|^2 + m^2 v_n^2 \right) \, dx \, dy \\
  &&+ a \int_{B(0,R)\cap \left( \{0\} \times \R^N \right)} \gamma(v_n)^2\, dy \\
  &&+ \left( \frac{1}{2} - \frac{1}{2p} \right) \liminf_{n \to +\infty} \int_{B(0,R)} \left( |\nabla \tilde{v}_n|^2 + m^2 \tilde{v}_n^2 \right) \, dx \, dy \\
  &&+ a \int_{B(0,R)\cap \left( \{0\} \times \R^N \right)} \gamma(\tilde{v}_n)^2 \, dy \\
  &\geq&  \left( \frac{1}{2} - \frac{1}{2p} \right) \left( \int_{B(0,R)} \left( |\nabla \tilde{v}|^2 + m^2 \tilde{v}^2 \right) \, dx \, dy + a \int_{B(0,R)\cap \left( \{0\} \times \R^N \right)} \gamma(\tilde{v})^2 \, dy
  \right) \\
  &=& L_a(v)+L_a(\tilde{v}) + o(1) = 2m_a + o(1)
\end{eqnarray*}
}
as $R \to +\infty$. This contradiction proves that 
\begin{equation} \label{eq:30}
\lim_{|(x,y)| \to +\infty} v(x,y)=0 \quad\text{uniformly with respect to $v \in S_a$}.
\end{equation}
From \cite[page 70]{CZN1} it follows immediately that
\begin{displaymath}
  \lim_{|y| \to +\infty} I_\alpha *|\gamma (v)|^p(y)=0, \quad\mbox{uniformly w.r.t. $v \in S_a$}.
\end{displaymath}
Pick $R_a>0$, independent of $v \in S_a$, such that $|y| \geq R_a$ implies
\[
\left| I_\alpha * |\gamma(v)|^p(y) \right| \left|\gamma(v) (y)
\right|^{p-2} \leq \frac{a}{2}.
\]
As a consequence,
\[
\left\{
\begin{array}{ll}
  -\Delta v + m^2 v = 0 &\mbox{in $\R_{+}^{N+1}$} \\
  -\frac{\partial v}{\partial x} \leq -\frac{a}{2} v &\mbox{in $\{0\} \times \{|y| \geq R_a \}$}
\end{array}
\right.
\]
As in \cite[Theorem 5.1]{CZN1} or \cite[Theorem 7.1]{CS14}, and recalling the uniform decay at infinity of (\ref{eq:30}), it
follows that $v$ decays exponentially fast at infinity, with constants
that are uniform with respect to $v \in S_a$.

We are ready to conclude: let $\{v_n\}_n$ be a sequence from
$S_a$. Our previous arguments show that \(\{v_n\}_n\) converges --- up
to a subsequence --- weakly to some $v \in H$, and this limit $v$ is
also a solution to equation~(\ref{eq:5}).
Fix
\[
r > \max \left\{ 1, \frac{N}{N(2-p)+p} \right\}
\]
and split $I_\alpha$ as $I_\alpha^1+I_\alpha^2$, where $I_\alpha^1 \in
L^r(\R^N)$ and $I_\alpha^2 \in L^\infty (\R^N)$. This induces a
decomposition of the non-local term
$\mathscr{N}(v)=\mathscr{N}^1(v)+\mathscr{N}^2(v)$ as
{\setlength\arraycolsep{2pt}
\begin{eqnarray*}
\mathscr{N}(v) &=& \frac{1}{2p} \int_{\R^{N}} \left( I_\alpha * |\gamma (v)|^p \right) |\gamma (v)|^p\, dy \\
\mathscr{N}^1(v) &=& \frac{1}{2p} \int_{\R^{N}} \left( I_\alpha^1 * |\gamma (v)|^p \right) |\gamma (v)|^p\, dy \\
\mathscr{N}^2(v) &=& \frac{1}{2p} \int_{\R^{N}} \left( I_\alpha^2 * |\gamma (v)|^p \right) |\gamma (v)|^p\, dy.
\end{eqnarray*}
}
We obtain immediately that
{\setlength\arraycolsep{2pt}
\begin{eqnarray}
0&=&\lim_{n \to +\infty} \left( \int_{\R_{+}^{N+1}} \left( |\nabla v_n|^2 + m^2 v_n^2 \right) \, dx\, dy - \mathscr{N}(v_n) \right) \nonumber \\
&=&\int_{\R_{+}^{N+1}} \left( |\nabla v|^2 + m^2 v^2 \right) \, dx\, dy - \mathscr{N}(v). \label{eq:6}
\end{eqnarray}
} We complete the proof by showing that $\lim_{n \to +\infty}
\mathscr{N}(v_n) = \mathscr{N}(v)$. Now, by the
Hardy-Littlewood-Sobolev inequality (see \cite[Theorem 4.3]{LiebLoss})
{\setlength\arraycolsep{2pt}
\begin{eqnarray*}
&&\left\vert
\mathscr{N}^1(v_{n})-\mathscr{N}^1(v)\right\vert \\
&\leq&
\int_{\mathbb{R}^{N}\times \mathbb{R}^N} I_\alpha^1 (x-y)\left\vert
|\gamma(v_{n}) (x)|^{p}|
\gamma(v_{n})(y)|^{p}-|\gamma(v)(x)|^{p}|\gamma(v)(y)|^{p}\right\vert dx dy \nonumber \\
&=& \int_{\mathbb{R}^{N}\times \mathbb{R}^N} I_\alpha^1 (x-y) \Big| |\gamma(v_{n}) (x)|^{p}|
\gamma(v_{n})(y)|^{p}-|\gamma(v_n)(x)|^p |\gamma(v)(y)|^p \nonumber \\
&&{} + |\gamma(v_n)(x)|^p |\gamma(v)(y)|^p -|\gamma(v)(x)|^{p}|\gamma(v)(y)|^{p}  \Big| dx\, dy \nonumber\\
&\leq& \int_{\mathbb{R}^{N}\times \mathbb{R}^N} I_\alpha^1 (x-y) |\gamma(v_n)(x)|^p \left| |\gamma(v_n)(y)|^p - |\gamma(v)(y)|^p \right|\, dx\, dy \nonumber \\
&&{} + \int_{\mathbb{R}^{N}\times \mathbb{R}^N} I_\alpha^1 (x-y) |\gamma(v)(y)|^p \left| |\gamma(v_n)(x)|^p - |\gamma(v)(x)|^p \right| \, dx\, dy
\nonumber \\
&=& 2
\int_{\mathbb{R}^N \times \mathbb{R}^{N}}
I_\alpha^1 (x-y) |\gamma(v_{n})(x)|^{p} \left\vert
|\gamma(v_{n})(y)|^{p}-|\gamma(v)(y)|^{p}\right\vert
dxdy\nonumber \\
 &\leq& 2C |I_\alpha^1|_r \left|\gamma(v_n)\right|^{p}_{\frac{2r p}{2r-1}} \left| |\gamma(v_n)|^p - |\gamma(v)|^p
\right|_{\frac{2r}{2r-1}} = o(1),
\end{eqnarray*}
} since $|\gamma(v_n)|^p - |\gamma(v)|^p \to 0$ strongly in
$L_{\mathrm{loc}}^{\frac{2r}{2r-1}}(\mathbb{R}^N)$ by the choice of
$r$. On the other hand,
\begin{multline*}
\left| \mathscr{N}^2(v_n)-\mathscr{N}^2(v) \right| \\
\leq \|I_\alpha^2\|_\infty \int_{\R^N \times \R^N} \left\vert
|\gamma(v_{n}) (x)|^{p}|
\gamma(v_{n})(y)|^{p}-|\gamma(v)(x)|^{p}|\gamma(v)(y)|^{p}\right\vert dx \, dy
\end{multline*}
and the conclusion follows as before. Since $\lim_{n \to +\infty}
\mathscr{N}(v_n) = \mathscr{N}(v)$, equation~(\ref{eq:6}) yields
$\lim_{n \to +\infty} \|v_n\|^2=\|v\|^2$, and the proof is complete.
\end{proof}

\section{The penalization scheme}

For
\[
\delta = \frac{1}{10}\, \mathrm{dist}(\mathscr{M},\R^N \setminus O ) \quad\hbox{and}\quad
\beta \in (0,\delta)
\]
we fix a cut-off $\varphi \in
C_0^\infty(\R_{+}^{N+1})$ such that $0 \leq \varphi \leq 1$
everywhere, $\varphi(x,y)=1$ if $x+|y| \leq \beta$, and
$\varphi(x,y)=0$ if $x+|y| \geq 2\beta$. Setting
$\varphi_\ge(x,y)=\varphi (\ge x, \ge y)$, for any $U \in S_{V_0}$
and any point $y_0 \in \mathscr{M}^\beta$
we define
\[
U_\ge^{y_0} (x,y) = \varphi_\ge \left( x , y -\frac{y_0}{\ge} \right) U\left( x , y -\frac{y_0}{\ge} \right)
\]
We also define, for all $\ge>0$,
\[
\chi_\ge (y) = \left\{
\begin{array}{ll}
0 &\mbox{if $y \in O_\ge$} \\
\ge^{-6/\mu} &\mbox{if $y \notin O_\ge$}
\end{array}
\right.
\]
and
\[
Q_\ge(v) = \left( \int_{\R^N} \chi_\ge \gamma(v)^2 \, dy -1 \right)_{+}^{\frac{2p+1}{2}}
\]
for $v \in H$.
Finally, let
\[
\Gamma_\ge (v) = \mathscr{E}_\ge (v) + Q_\ge (v), \quad v \in H.
\]
We want to find a solution, for $\ge>0$ sufficiently small, near the
set
\[
X_\ge = \left\{ U_\ge^{y_0} \mid \mbox{$y_0 \in \mathscr{M}^\beta$ and
    $U \in S_{V_0}$} \right\}.
\]
We define the (trivial) path $\psi_\ge (s) = s U_\ge^{y_0}$ for every
$s \in [0,1]$.
\begin{lemma} \label{lem:1} There exists $T>0$ such that
  $\Gamma_\ge(\psi_\ge(T))<-2$ for all $\ge$ sufficiently
  small. Moreover,
  \[
    \lim_{\ge \to 0} \max_{s \in [0,T]} \Gamma_\ge (\psi_\ge(s))=E_{V_0}
  \]
where we recall that $E_{V_0}=L_{V_0}(U)$ for $U \in
S_{V_0}$.
\end{lemma}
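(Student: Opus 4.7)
The first observation is that $Q_\ge$ vanishes identically along $\psi_\ge$. The trace $\gamma(U_\ge^{y_0})$ is supported where $\varphi_\ge(0,\cdot)$ is, namely in the ball $\{y \mid |\ge y - y_0| \le 2\beta\}$. For any such $y$ one has $\mathrm{dist}(\ge y, \mathscr{M}) \le 2\beta + \mathrm{dist}(y_0, \mathscr{M}) \le 3\beta < 3\delta$, while $\mathrm{dist}(\mathscr{M}, \R^N \setminus O) = 10\delta$; therefore $\ge y \in O$, i.e.\ $y \in O_\ge$ and $\chi_\ge(y) = 0$ on the trace support of $sU_\ge^{y_0}$. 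Hence $Q_\ge(\psi_\ge(s)) = 0$ and $\Gamma_\ge(\psi_\ge(s)) = \mathscr{E}_\ge(sU_\ge^{y_0})$ for all $s$.

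Next, after the translation $z = y - y_0/\ge$, the map $s \mapsto \mathscr{E}_\ge(sU_\ge^{y_0})$ is a polynomial in $s$ of degree $2p$ whose coefficients are, up to constants, $\int_{\R_{+}^{N+1}}(|\nabla(\varphi_\ge U)|^2 + m^2(\varphi_\ge U)^2)$, $\int_{\R^N} V(y_0 + \ge z)\, \varphi(0, \ge z)^2\, \gamma(U)(z)^2\, dz$, and the corresponding Choquard-type double integral. Since $\varphi_\ge \to 1$ pointwise, $V$ is bounded and continuous with $V(y_0 + \ge z) \to V(y_0) = V_0$ (taking $y_0 \in \mathscr{M}$), and $U \in H$ enjoys the exponential decay from Proposition~\ref{prop:comp}, dominated convergence handles the first two terms. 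For the nonlocal term I would split $I_\alpha = I_\alpha^1 + I_\alpha^2$ with $I_\alpha^1 \in L^r(\R^N)$ (for a suitable $r$, exactly as in the proof of Proposition~\ref{prop:comp}) and $I_\alpha^2 \in L^\infty(\R^N)$, applying the Hardy--Littlewood--Sobolev inequality on the first piece and dominated convergence on the second; the condition $(N-1)p - N < \alpha < N$ ensures $|\gamma(U)|^p \in L^{2N/(N+\alpha)}(\R^N)$, giving the required integrability. Convergence of each coefficient forces uniform convergence of $s \mapsto \mathscr{E}_\ge(sU_\ge^{y_0})$ to $s \mapsto L_{V_0}(sU)$ on every compact subinterval of $[0,\infty)$.

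To conclude, I would choose $T > 1$ so large that $L_{V_0}(TU) < -3$; this is possible because $L_{V_0}(sU)$ is a polynomial with negative leading term of degree $2p > 2$. The uniform convergence just established then yields $\Gamma_\ge(\psi_\ge(T)) = \mathscr{E}_\ge(TU_\ge^{y_0}) < -2$ for all sufficiently small $\ge$. Finally, since $U \in S_{V_0}$ is a critical point of $L_{V_0}$, the Nehari identity gives
\[
L_{V_0}(sU) = \frac{1}{2p}\left(p s^2 - s^{2p}\right) \int_{\R^N}(I_\alpha * |\gamma U|^p)|\gamma U|^p\, dy,
\]
a function uniquely maximized on $[0,T]$ at $s=1$ with value $E_{V_0}$. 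Uniform convergence on $[0,T]$ delivers $\max_{s \in [0,T]} \Gamma_\ge(\psi_\ge(s)) \to E_{V_0}$.

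The main obstacle is the convergence of the nonlocal coefficient, because of the singularity of $I_\alpha$ at the origin and the need for uniformity in $s$. This is precisely where the splitting $I_\alpha = I_\alpha^1 + I_\alpha^2$ combined with the Hardy--Littlewood--Sobolev inequality (in the style already used in the proof of Proposition~\ref{prop:comp}) is crucial; the polynomial structure of $\mathscr{E}_\ge(\,\cdot\, U_\ge^{y_0})$ in $s$ then upgrades pointwise convergence of the coefficients to uniform convergence on compact $s$-intervals.
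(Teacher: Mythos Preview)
Your argument is correct and follows essentially the same route as the paper: you show $Q_\ge$ vanishes along $\psi_\ge$, change variables, and use the exponential decay of $U$ to prove that $\mathscr{E}_\ge(sU_\ge^{y_0}) \to L_{V_0}(sU)$ uniformly in $s$ on compacts, from which both conclusions follow. Your write-up is in fact more detailed than the paper's (which simply invokes the definition of $Q_\ge$, a change of variables, and the decay of $U$); in particular, your explicit support computation for $Q_\ge=0$, the $I_\alpha = I_\alpha^1 + I_\alpha^2$ splitting for the nonlocal coefficient, and the Nehari identity for identifying the maximum are all natural elaborations of steps the paper leaves implicit.
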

\begin{proof}
  Indeed, by our definition of the penalization term $Q_\ge$, by a simple change of variables and by the
  exponential decay of $U$ at infinity,
{\setlength\arraycolsep{2pt}
\begin{eqnarray*}
\Gamma_\ge(\psi_\ge(s)) &=& \mathscr{E}_\ge (\psi_\ge(s)) \\
&=& \frac{s^2}{2} \int_{\R_{+}^{N+1}} |\nabla \psi_\ge (s)|^2 + \frac{m^2 s^2}{2} \int_{\R_{+}^{N+1}} \psi_\ge(s)^2 + \frac{s^2}{2} \int_{\R^N} V_\ge \gamma(\psi_\ge(s))^2 \\
&&- \frac{s^{2p}}{2p} \int_{\R^N} \left( I_\alpha * |\gamma(\psi_\ge(s))|^p \right) |\gamma (\psi_\ge(s))|^p \\
&=& \left(\frac{1}{2} \int_{\R_{+}^{N+1}} |\nabla U|^2 + \frac{m^2}{2} \int_{\R_{+}^{N+1}} U^2 + \frac{1}{2} \int_{\R^N} V_0 \gamma(U)^2 +o(1) \right)s^2 \\
&&- \left( \int_{\R^N} \left( I_\alpha * |U|^p \right) |U|^p+o(1) \right) \frac{s^{2p}}{2p}
\end{eqnarray*}
}
where $o(1) \to 0$ as $\ge \to 0$ uniformly with respect to $s$. The conclusion follows easily.
\end{proof}
We are ready to introduce our mini-max scheme. For $\ge>0$ sufficiently
small, we define the set of paths
\[
\Phi_\ge = \left\{ \psi \in C([0,T],H) \mid \psi(0)=0,
  \psi(T)=\psi_\ge(T)=T U_\ge^{y_0} \right\},
\]
where $T>0$ is the number we found in Lemma \ref{lem:1}. To this set
we associate the min-max level
\[
C_\ge = \inf_{\psi \in \Phi_\ge} \max_{s \in [0,T]} \Gamma_\ge
(\psi(s)).
\]
By well-known arguments (see for instance \cite[Proposition 3.2]{CJS}
for a proof in a local setting that extends smoothly to our case) it
is possible to prove that
\begin{displaymath} %\label{eq:7}
\lim_{\ge \to 0} C_\ge = E_{V_0}.
\end{displaymath}
For $\alpha \in \mathbb{R}$ define the sublevel
\[
\Gamma_\ge^\alpha = \left\{ v \in H \mid \Gamma_\ge (v) \leq \alpha \right\}.
\]
\begin{proposition} \label{prop:4.2}
  Let $d>0$ be small enough, and
  let $\{\ge_j\}_j$ be such that $\lim_{j \to +\infty} \ge_j = 0$ and
  let $\{v_{\ge_j}\} \subset X_{\ge_j}^d$ be such that
\begin{displaymath}
  \lim_{j \to +\infty} \Gamma_{\ge_j}(v_{\ge_j}) \leq E_{V_0}, \quad \lim_{j \to +\infty} \Gamma'_{\ge_j}(v_{\ge_j})=0.
\end{displaymath}
Then there exist --- up to a subsequence --- $\{\tilde y_j \}_j \subset
\mathbb{R}^N$, a point $\bar y \in \mathscr{M}$ and $U \in S_{V_0}$ such
that
\begin{eqnarray*}
&&\lim_{j \to +\infty} |\ge_j \tilde y_j - \bar y | = 0 \\
&& \lim_{j \to +\infty} \left\| v_{\ge_j} - \varphi_{\ge_j}(\cdot,\cdot - \tilde y_j) U(\cdot,\cdot - \tilde y_j) \right\|=0.
\end{eqnarray*}
\end{proposition}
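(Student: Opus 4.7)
The plan is to combine the $d$-closeness hypothesis with a translation--compactness argument, using the compactness of $S_{V_0}$ from Proposition~\ref{prop:comp} together with the strict monotonicity of $a \mapsto E_a$ to force the concentration point into $\mathscr{M}$.

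First I would unpack the closeness condition: for each $j$ pick $U_j \in S_{V_0}$ and $y_{0,j} \in \mathscr{M}^\beta$ with $\|v_{\ge_j} - U_{\ge_j}^{y_{0,j}}\| \leq d$. Since $\mathscr{M}^\beta$ is compact and $S_{V_0}$ is compact in $H$ by Proposition~\ref{prop:comp}, along a subsequence $y_{0,j} \to y_0 \in \mathscr{M}^\beta$ and $U_j \to U^\ast \in S_{V_0}$ in $H$. Set $\tilde y_j = y_{0,j}/\ge_j$ and translate $w_j(x,y) = v_{\ge_j}(x, y + \tilde y_j)$; then $\{w_j\}$ is bounded in $H$, and along a further subsequence $w_j \rightharpoonup w$ in $H$ and locally uniformly in $\R_{+}^{N+1}$ (using the H\"older control from the proof of Proposition~\ref{prop:comp}). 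For $d$ small enough, $w \not\equiv 0$ because $w_j$ stays within distance $d$ of the nontrivial function $\varphi_{\ge_j}(\cdot,\cdot) U^\ast$, whose limit is the nonzero $U^\ast$.

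Next I would pass to the limit in $\Gamma'_{\ge_j}(v_{\ge_j}) \to 0$ in the translated frame. On bounded sets $V_{\ge_j}(y + \tilde y_j) = V(\ge_j y + y_{0,j}) \to V(y_0)$ uniformly, and the Hartree term is handled exactly as at the end of the proof of Proposition~\ref{prop:comp} via the splitting $I_\alpha = I_\alpha^1 + I_\alpha^2$ and the Hardy--Littlewood--Sobolev estimate. The penalization contribution $Q'_{\ge_j}(v_{\ge_j})$ vanishes against any test function whose support is eventually contained in $O_{\ge_j} - \tilde y_j$, which is an exhausting family since $y_0 \in \mathscr{M} \subset \mathrm{int}\,O$. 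Consequently $w$ solves the limit equation
\[
\sqrt{-\Delta + m^2}\, w + V(y_0)\, w = \left( I_\alpha * |w|^p \right) |w|^{p-2} w
\]
in its local realization, so $L_{V(y_0)}(w) \geq E_{V(y_0)}$. Combining weak lower semicontinuity with a Br\'ezis--Lieb type decomposition of the Hartree term (any mass escape at infinity being ruled out by the uniform decay of Proposition~\ref{prop:comp}) gives
\[
E_{V_0} \geq \limsup_{j \to +\infty} \Gamma_{\ge_j}(v_{\ge_j}) \geq L_{V(y_0)}(w) + \limsup_{j \to +\infty} Q_{\ge_j}(v_{\ge_j}) \geq E_{V(y_0)}.
\]
Since $\beta < \delta$ we have $\mathscr{M}^\beta \subset O$ and thus $V(y_0) \geq V_0$; the strict monotonicity and continuity of $a \mapsto E_a$ then force $V(y_0) = V_0$, i.e.\ $\bar y := y_0 \in \mathscr{M}$, together with $Q_{\ge_j}(v_{\ge_j}) \to 0$ and $L_{V_0}(w) = E_{V_0}$. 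After a bounded further translation in $\tilde y_j$ to enforce (\ref{eq:24}) we may take $U = w \in S_{V_0}$; the saturated energy identity then promotes weak to strong convergence in $H$ through the same HLS computation, yielding
\[
\left\| v_{\ge_j} - \varphi_{\ge_j}(\cdot,\cdot - \tilde y_j) U(\cdot,\cdot - \tilde y_j) \right\| \to 0,
\]
since $\varphi_{\ge_j}$ tends to $1$ on any fixed ball and $U$ decays exponentially. The main obstacle is the simultaneous handling of the three nonlocal ingredients---the Dirichlet-to-Neumann extension, the Hartree convolution and the penalization---so as to both rule out mass leakage in the Br\'ezis--Lieb step and to exclude the possibility $V(y_0) > V_0$, which is precisely where the strict monotonicity of $a \mapsto E_a$ is indispensable.
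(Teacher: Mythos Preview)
Your overall strategy is sound and, interestingly, streamlines the paper's argument. Where the paper decomposes $v_\ge = v_{1,\ge}+v_{2,\ge}$ with $v_{1,\ge}$ the truncated translate of a fixed $Z\in S_{V_0}$, then runs an annulus argument via Lions' lemma to justify the splitting $\Gamma_\ge(v_\ge)\geq \Gamma_\ge(v_{1,\ge})+\Gamma_\ge(v_{2,\ge})+o(1)$, and afterwards repeats a concentration--compactness step on $v_{1,\ge}^1$, you go directly through the weak limit $w$ of the translated sequence $w_j$ and a single nonlocal Br\'ezis--Lieb decomposition. What your route buys is economy: no annulus contradiction, no second dichotomy. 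What the paper's route buys is that the remainder $v_{2,\ge}$ has $\|v_{2,\ge}\|\leq 2d$ \emph{by construction}, which makes the lower bound $\Gamma_\ge(v_{2,\ge})\geq o(1)$ transparent.

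There is, however, one point where your justification is wrong even though the step itself can be saved. You write that ``mass escape at infinity'' is ruled out by the uniform decay of Proposition~\ref{prop:comp}. That proposition controls only elements of $S_a$, not the Palais--Smale sequence $w_j$, so it says nothing about where $r_j=w_j-w$ might live. The correct reason the remainder is harmless is purely quantitative: from $\|w_j-U^\ast\|\leq d+o(1)$ and weak lower semicontinuity $\|w-U^\ast\|\leq d$, the triangle inequality gives $\|r_j\|\leq 2d+o(1)$; then coercivity (using $V\geq V_{\min}>-m$) and the HLS bound $\mathscr{N}(r_j)\leq C\|r_j\|^{2p}$ make the remainder energy nonnegative for $d$ small, which is exactly what you need for $\limsup\mathscr{E}_{\ge_j}(v_{\ge_j})\geq L_{V(y_0)}(w)$. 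A minor slip in the same spirit: when you dispose of $Q'_{\ge_j}$ against compactly supported test functions you invoke $y_0\in\mathscr{M}$, but at that stage you only know $y_0\in\mathscr{M}^\beta$; this is harmless since $\mathscr{M}^\beta\subset O$, but the membership $y_0\in\mathscr{M}$ is a conclusion, not an input.
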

\begin{proof}
  In the proof we will drop the index $j$ and write $\ge$ instead of $\ge_j$ for
  simplicity. By Proposition \ref{prop:comp}, there exist $Z \in
  S_{V_0}$, $\{y_\ge\} \subset \mathscr{M}^\beta$ and $\bar{y} \in
  \mathscr{M}^\beta$ such that $y_\ge \to \bar y$ as $\ge \to 0$ and
\begin{equation} \label{eq:10}
\left\|
v_\ge - \varphi_\ge \left(\cdot,\cdot - \frac{y_\ge}{\ge}\right) Z \left(\cdot,\cdot-\frac{y_\ge}{\ge} \right)
\right\| \leq 2d
\quad\hbox{for every $\ge \ll 1$}.
\end{equation}
We set
\[
v_{1,\ge} = \varphi_\ge \left(\cdot,\cdot - \frac{y_\ge}{\ge}\right) Z \left(\cdot,\cdot-\frac{y_\ge}{\ge} \right),
\quad
v_{2,\ge} = v_\ge - v_{1,\ge}.
\]
We claim that
\begin{equation}\label{eq:8}
  \Gamma_\ge ( v_\ge) \geq \Gamma_\ge (v_{1,\ge}) + \Gamma_\ge (v_{2,\ge}) + O (\ge).
\end{equation}
Suppose that there exist $R>0$ and points
\[
\tilde{y}_\ge \in B\left( \frac{y_\ge}{\ge},\frac{2\beta}{\ge} \right) \setminus
B \left( \frac{y_\ge}{\ge},\frac{\beta}{\ge} \right)
\]
such that
\[
\liminf_{\ge \to 0} \int_{B(\tilde{y}_\ge,R)} \gamma(v_\ge)^2 \, dy >0.
\]
Set $\tilde{v}_\ge(x,y) = v_\ge(x,y + \tilde{y}_\ge)$ so that
\begin{equation} \label{eq:9}
\liminf_{\ge \to 0} \int_{B(0,R)} \gamma(\tilde{v}_\ge)^2 \, dy >0.
\end{equation}
Up to subsequences, we can assume that
\[
\lim_{\ge \to 0} \ge \tilde{y}_\ge = y_0 \in \overline{B(\bar y,2\beta)
  \setminus B(\bar y,\beta)}.
\]
The sequence $\{v_\ge\}$ is bounded in $H$ and hence in every
$L^q(\mathbb{R}^N)$ with $q<2N/(N-1)$. As a consequence,
$\tilde{v}_\ge \to \mathcal{W}$ weakly in $H$ and strongly in
$L_{\mathrm{loc}}^q(\mathbb{R}^N)$ for every $q < 2N/(N-1)$. By
(\ref{eq:9}), $\mathcal{W} \neq 0$. Moreover,
\[
\sqrt{-\Delta + m^2} \mathcal{W} + V(y_0)\mathcal{W} = \left( I_\alpha
  * |\mathcal{W}|^p \right) |\mathcal{W}|^{p-2} \mathcal{W}.
\]
Choosing $R \gg 1$,
\[
\liminf_{\ge \to 0} \int_{(0,+\infty) \times B(\tilde{y}_\ge,R)} \left( |\nabla v_\ge|^2 + m^2 v_\ge^2 \right) \, dx\, dy \geq \frac{1}{2} \int_{\mathbb{R}_{+}^{N+1}} \left( |\nabla \mathcal{W}|^2 + m^2 \mathcal{W}^2 \right) \, dx\, dy.
\]
Since $E_a > E_b$ whenever $a>b$, we have
\[
L_{V(y_0)} (\mathcal{W}) \geq E_{V(y_0)} \geq E_{V_0}.
\]
Hence, for some absolute constant $c_0>0$,
\[
\liminf_{\ge \to 0} \int_{(0,+\infty) \times B(\tilde{y}_\ge,R)} \left( |\nabla v_\ge|^2 + m^2 v_\ge^2 \right) \, dx\, dy \geq
c_0 \cdot L_{V(y_0)}(\mathcal{W}) \geq c_0 \cdot E_{V_0} >0,
\]
and this is a contradiction to the exponential decay at infinity of
$Z$ and the fact that $y_0 \neq \bar{y}$.

Since such a sequence $\{\tilde{y}_\ge\}$ cannot exist, a Lemma of
P.-L. Lions (see \cite[Lemma I.1]{Lions}) implies that
\[
 \limsup_{\ge \to 0} \int_{B(\frac{y_\ge}{\ge},\frac{2\beta}{\ge})\setminus B(\frac{y_\ge}{\ge},\frac{\beta}{\ge})}
 |\gamma(v_\ge)|^{\frac{N+1}{N-1}}\, dy =0.
\]
This, the boundedness of $\{\gamma(v_\ge)\}$ in $L^2$ and the
Hardy-Littlewood-Sobolev inequality imply {\setlength\arraycolsep{2pt}
\begin{eqnarray*}
&&\lim_{\ge \to 0} \left( \int_{\mathbb{R}^N} (I_\alpha * |\gamma(v_\ge)|^{p})|\gamma(v_\ge)|^p \, dy \right.
 \\
&&\qquad {}- \left. \int_{\mathbb{R}^N}
 (I_\alpha * |\gamma(v_{1,\ge})|^{p})|\gamma(v_{1,\ge})|^p \, dy - \int_{\mathbb{R}^N} (I_\alpha * |\gamma(v_{2,\ge})|^{p})|\gamma(v_{2,\ge})|^p \, dy \right)
 =0.
\end{eqnarray*}
}
If we write
{\setlength\arraycolsep{2pt}
\begin{eqnarray*}
&& \Gamma_\ge(v_\ge) = \Gamma_\ge (v_{1,\ge})+\Gamma_\ge(v_{2,\ge}) \\
 &&{}+ \int_{(0,+\infty) \times \left( B(\frac{y_\ge}{\ge},\frac{2\beta}{\ge})\setminus B(\frac{y_\ge}{\ge},\frac{\beta}{\ge}) \right)}
 \varphi_\ge \left( x,y-\frac{y_\ge}{\ge} \right) \left( 1- \varphi_\ge \left( x,y-\frac{y_\ge}{\ge} \right) \right)
 |\nabla v_\ge|^2 \, dx \, dy \\
 &&{}+\int_{B(\frac{y_\ge}{\ge},\frac{2\beta}{\ge})\setminus B(\frac{y_\ge}{\ge},\frac{\beta}{\ge})}
 V_\ge \gamma \left(\varphi_\ge \left( x,y-\frac{y_\ge}{\ge} \right) \right) \left( 1- \gamma \left( \varphi_\ge \left( x,y-\frac{y_\ge}{\ge} \right) \right) \right)
 \gamma(v_\ge)^2 \, dy \\
 &&{}- \frac{1}{2p} \int_{\mathbb{R}^N} (I_\alpha * |\gamma(v_\ge)|^p ) |\gamma(v_\ge)|^p \, dy \\
 &&{}+ \frac{1}{2p} \int_{\mathbb{R}^N}
 (I_\alpha * |\gamma(v_{1,\ge})|^{p})|\gamma(v_{1,\ge})|^p \, dy \\
 &&{}+ \frac{1}{2p} \int_{\mathbb{R}^N} (I_\alpha * |\gamma(v_{2,\ge})|^{p})|\gamma(v_{2,\ge})|^p \, dy + o(1)
\end{eqnarray*}
}
as $\ge \to 0$, we deduce that (\ref{eq:8}) holds true. We now estimate $\Gamma_\ge(v_{2,\ge})$. There results
{\setlength\arraycolsep{2pt}
\begin{eqnarray} \label{eq:11}
\Gamma_\ge(v_{2,\ge}) &\geq& \mathscr{E}_\ge(v_{2,\ge}) \nonumber\\
&=& \frac{1}{2} \int_{\mathbb{R}_{+}^{N+1}} |\nabla v_{2,\ge}|^2 \, dx \, dy + \frac{1}{2} \int_{\mathbb{R}^N} V_\ge \gamma(v_{2,\ge})^2 \, dy \nonumber \\
&&
{}- \frac{1}{2p} \int_{\mathbb{R}^N} (I_\alpha * |\gamma(v_{2,\ge})|^p) |\gamma (v_{2,\ge})|^p \, dy.
\end{eqnarray}
}
For some constant $C>0$ and using again the boundedness of $\{\gamma(v_{2,\ge})\}$ in $L^2$,
\[
 \int_{\mathbb{R}^N} (I_\alpha * |\gamma(v_{2,\ge})|^p) |\gamma (v_{2,\ge})|^p \, dy \leq C \|v_{2,\ge}\|.
\]
Now (\ref{eq:10}) implies that $\|v_{2,\ge}\| \leq 4d$ for small
values of $\ge$. Taking $d>0$ sufficiently small uniformly with
respect to $\ge$, we have
\begin{displaymath} %\label{eq:12}
 \frac{1}{2} \|v_{2,\ge}\|^2 - \frac{1}{2p} \int_{\mathbb{R}^N} (I_\alpha * |\gamma(v_{2,\ge})|^p) |\gamma (v_{2,\ge})|^p \, dy
 \geq \frac{1}{8} \|v_{2,\ge}\|^2.
\end{displaymath}
Since the functional $\mathscr{E}_\ge$ is uniformly bounded in $X_\ge^d$ for small $\ge>0$, the penalization term $Q_\ge$ is uniformly bounded in $X_\ge^d$ for small $\ge>0$ as well.
As a consequence, for an absolute constant $C>0$,
\begin{equation} \label{eq:13}
 \int_{\mathbb{R}^N \setminus O_\ge} \gamma(v_{2,\ge})^2 \, dy \leq C \ge^{\frac{6}{\mu}},
\end{equation}
and (\ref{eq:11}--\ref{eq:13}) imply $\Gamma(v_{2,\ge}) \geq o(1)$ as $\ge \to 0$.

Let us introduce
\[
 v_{1,\ge}^1(x,y) =
 \left\{
 \begin{array}{ll}
    v_{1,\ge}(x,y) &\hbox{if $y \in O_\ge$} \\
    0 &\hbox{otherwise.}
 \end{array}
 \right.
\]
For $\mathfrak{W}_\ge(x,y)=v_{1,\ge}^1(x,y+y_\ge/\ge)$, we can proceed as before and conclude that $\mathfrak{W}_\ge$
converges weakly in $L^q(\mathbb{R}_{+}^{N+1})$, $q<2N/(N-1)$, to a solution $\mathfrak{W}$ of
\[
 \sqrt{-\Delta +m^2} \mathfrak{W} + V(\bar{y})\mathfrak{W} = (I_\alpha * |\mathfrak{W}|^{p})|\mathfrak{W}|^{p-2}\mathfrak{W}.
\]
We claim that $\mathfrak{W}_\ge$ converges to $\mathfrak{W}$ strongly
in $H$. As before, assume the existence of a radius $R>0$ and of a
sequence $\{z_\ge\}\subset \mathbb{R}^N$ such that $z_\ge \in
B(y_\ge/\ge,2\beta/\ge)$,
\[
\liminf_{\ge \to 0} \left| z_\ge - \ge^{-1}y_\ge \right| =0
\quad\hbox{and}\quad \liminf_{\ge \to 0} \int_{B(z_\ge,R)} |\gamma
(v_{1,\ge}^1)|^2 \, dy >0.
\]
Without loss of generality, $\ge z_\ge \to z \in O$ as $\ge \to
0$. Then $\widetilde{\mathfrak{W}}_\ge(x,y) = \mathfrak{W}_\ge
(x,y+z_\ge)$ converges weakly in $L^q(\mathbb{R}_{+}^{N+1})$,
$q<2N/(N-1)$, to some $\widetilde{\mathfrak{W}} \in H$ that solves
\[
\sqrt{-\Delta +m^2} \widetilde{\mathfrak{W}} +
V(z)\widetilde{\mathfrak{W}} = (I_\alpha *
|\widetilde{\mathfrak{W}}|^{p})|\widetilde{\mathfrak{W}}|^{p-2}\widetilde{\mathfrak{W}}.
\]
and we obtain a contradiction as before. Again,
\begin{equation} \label{eq:17} \lim_{\ge \to 0} \int_{\mathbb{R}^N}
  (I_\alpha *
  |\gamma(\mathfrak{W}_\ge)|^{p})|\gamma(\mathfrak{W}_\ge)|^p \, dy =
  \int_{\mathbb{R}^N} (I_\alpha * |\gamma(\mathfrak{W})|^{p})
  |\gamma(\mathfrak{W})|^p\, dy.
\end{equation}
Hence
{\setlength\arraycolsep{2pt}
\begin{eqnarray*} %\label{eq:14}
\limsup_{\ge \to 0} \Gamma_\ge(v_{1,\ge}^1) &\geq& \liminf_{\ge \to 0} \Gamma_\ge(v_{1,\ge}^1) \nonumber\\
&\geq& \liminf_{\ge \to 0} \frac{1}{2} \int_{(0,+\infty) \times B(0,R)} |\nabla \mathfrak{W}_\ge|^2 \, dx\, dy \nonumber \\
&&{}+\frac{1}{2}\int_{B(0,R)} V(\ge y+y_\ge) |\gamma(\mathfrak{W}_\ge)|^2\, dy \nonumber \\
&&{}-\frac{1}{2p}\int_{\mathbb{R}^N} (I_\alpha * |\gamma(\mathfrak{W}_\ge)|^p)|\gamma(\mathfrak{W}_\ge)|^p \, dy \nonumber \\
&\geq& \frac{1}{2} \int_{(0,+\infty) \times B(0,R)} |\nabla \mathfrak{W}_\ge|^2 \, dx\, dy \nonumber \\
&&{}+ \frac{1}{2}V(\bar{y}) \int_{B(0,R)}  |\gamma(\mathfrak{W}_\ge)|^2\, dy \nonumber \\
&&{}-\frac{1}{2p} \int_{\mathbb{R}^N} (I_\alpha * |\gamma(\mathfrak{W})|^{p}) |\gamma(\mathfrak{W})|^p\, dy.
\end{eqnarray*}
}
Since $R>0$ is arbitrary,
{\setlength\arraycolsep{2pt}
\begin{eqnarray} \label{eq:15}
\limsup_{\ge \to 0} \Gamma_\ge(v_{1,\ge}^1) &\geq& \frac{1}{2} \int_{\mathbb{R}_{+}^{N+1}} |\nabla \mathfrak{W}|^2\, dx\, dy +
\frac{1}{2} V(\bar{y}) \int_{\mathbb{R}^N} |\gamma(\mathfrak{W})|^2\, dy \nonumber \\
&&{}-\int_{\mathbb{R}^N} (I_\alpha *|\gamma(\mathfrak{W})|^p) |\gamma(\mathfrak{W})|^p \, dy \nonumber \\
&=& L_{V(\bar{y})} (\mathfrak{W}) \nonumber \\
&\geq& E_{V_0}.
\end{eqnarray}
}
Recalling (\ref{eq:8}), we find
\[
 \limsup_{\ge \to 0} \left(
 \Gamma_\ge (v_{2,\ge})+\Gamma_\ge(v_{1,\ge}^1)
 \right) = \limsup_{\ge \to 0} \left(
 \Gamma_\ge (v_{2,\ge})+\Gamma_\ge(v_{1,\ge})
 \right) \leq \limsup_{\ge \to 0} \Gamma_\ge (v_\ge) \leq E_{V_0}.
\]
Now $\Gamma_\ge(u_{2,\ge}) \geq o(1)$ yields
\[
 \lim_{\ge \to 0} \Gamma_\ge(v_{1,\ge}^1) = E_{V_0}.
\]
What we have just proved entails that $L_{V(\bar{y})}(\mathfrak{W}) = E_{V_0}$,
and then $\bar{y} \in \mathscr{M}$.  As a consequence, $\mathfrak{W}$ is, up to a translation in the $y$-variable,
an element of $S_{V_0}$, namely $\mathfrak{W}(x,y)=U(x,y-\mathfrak{z})$ for some $U \in S_{V_0}$
and some $\mathfrak{z}\in\mathbb{R}^N$.

Recalling that $V \geq V(\bar{y})$ on the subset $O$ and using the
identity $L_{V(\bar{y})}(\mathfrak{W}) = E_{V_0}$ we get
{\setlength\arraycolsep{2pt}
\begin{eqnarray} \label{eq:16}
 &&\int_{\mathbb{R}_{+}^{N+1}} |\nabla \mathfrak{W}|^2 \, dx\, dy + V_0\int_{\mathbb{R}^N} |\gamma
 (\mathfrak{W})|^2\, dy - 2p \int_{\mathbb{R}^N} (I_\alpha * |\gamma(\mathfrak{W})|^p) |\gamma(\mathfrak{W})|^p \, dy \nonumber \\
 &&\quad \geq  \limsup_{\ge \to 0} \int_{\mathbb{R}_{+}^{N+1}} |\nabla \mathfrak{W}_\ge|^2 \, dx\, dy
 + \int_{\mathbb{R}^N} V(\ge y+y_\ge) |\gamma(\mathfrak{W}_\ge)|^2 \, dy \nonumber \\
 &&\quad\quad{}-2p \int_{\mathbb{R}^N} (I_\alpha * |\gamma(\mathfrak{W}_\ge)|^p) |\gamma(\mathfrak{W}_\ge)|^p \, dy \nonumber \\
 &&\quad\geq \limsup_{\ge \to 0} \int_{\mathbb{R}_{+}^{N+1}} |\nabla \mathfrak{W}_\ge|^2 \, dx\, dy
 + \int_{\mathbb{R}^N} V(\bar{y}) |\gamma(\mathfrak{W}_\ge)|^2 \, dy \nonumber \\
 &&\quad\quad{}-2p \int_{\mathbb{R}^N} (I_\alpha * |\gamma(\mathfrak{W}_\ge)|^p) |\gamma(\mathfrak{W}_\ge)|^p \, dy \nonumber \\
 &&\quad\geq \int_{\mathbb{R}_{+}^{N+1}} |\nabla \mathfrak{W}|^2 \, dx\, dy + V_0\int_{\mathbb{R}^N} |\gamma
 (\mathfrak{W})|^2\, dy - 2p \int_{\mathbb{R}^N} (I_\alpha * |\gamma(\mathfrak{W})|^p) |\gamma(\mathfrak{W})|^p \, dy,
\end{eqnarray}
}
and therefore
\[
 \lim_{\ge \to 0} \int_{\mathbb{R}^N} V(\ge y+y_\ge)|\gamma(\mathfrak{W}_\ge)|^2 \, dy = \int_{\mathbb{R}^N} V(\bar{y})|\gamma(\mathfrak{W})|^2 \, dy.
\]
Using again the fact that $V \geq V(\bar{y})$ on the subset $O$ we
conclude that $\gamma(\mathfrak{W}_\ge) \to \gamma(\mathfrak{W})$
strongly in $L^2(\mathbb{R}^N)$. Finally, from (\ref{eq:17}),
(\ref{eq:15}) and (\ref{eq:16}) we see that
{\setlength\arraycolsep{2pt}
\begin{multline*}
\int_{\mathbb{R}_{+}^{N+1}} |\nabla \mathfrak{W}|^2 \, dx\, dy +
\int_{\mathbb{R}^N} V(\bar{y})|\gamma (\mathfrak{W})|^2 \, dy \\
\geq
\limsup_{\ge \to 0} \int_{\mathbb{R}_{+}^{N+1}} |\nabla
\mathfrak{W}_\ge|^2 \, dx\, dy + \int_{\mathbb{R}^N} V(\bar{y})|\gamma
(\mathfrak{W}_\ge)|^2 \, dy.
\end{multline*}
}
The strong convergence of $\mathfrak{W}_\ge$ to $\mathfrak{W}$ in $H$
is now proved. Thus
\[
 v_{1,\ge}^1 (x,y) = U\left(x,y-\frac{y_\ge}{\ge}-\mathfrak{z} \right) + o(1),
\]
and straightforward algebraic manipulations show that
\[
 v_{1,\ge}(x,y) = \varphi_\ge \left( x,y-\frac{y_\ge}{\ge}-\mathfrak{z} \right) U \left( x,y-\frac{y_\ge}{\ge}-\mathfrak{z} \right) +o(1)
\]
strongly in $H$. But $E_{V_0} \geq \lim_{\ge \to 0}
\Gamma_\ge(v_\ge)$ and $\lim_{\ge \to 0} \Gamma_\ge(v_{1,\ge}) =
E_{V_0}$, so that $\lim_{\ge \to 0} \Gamma_\ge (v_{2,\ge})=0$ by
(\ref{eq:8}). Using (\ref{eq:11}) and (\ref{eq:13}) we discover that
$v_{2,\ge} \to 0$ strongly in $H$. This completes the proof.

\section{Critical points of the penalized functional}

We are now ready to show that the penalized functional $\Gamma_\ge$
possesses a critical point for every $\ge >0$ sufficiently small.
\begin{lemma} \label{lem:5.1}
For $d>0$ sufficiently small, there exist positive constants
$\ge_0$ and $\omega$ such that $|\Gamma'_\ge(v)| \geq \omega$
for every $v \in \Gamma_\ge^{D_\ge} \cap \left( X_\ge^d \setminus X_\ge^{d/2}
\right)$ and $\ge \in \left( 0,\ge_0 \right)$.
\end{lemma}
\begin{proof}
  If not, for $d>0$ so small that Proposition \ref{prop:4.2} applies,
  there exist sequences $\{\ge_j\}_j$ with $\lim_j \ge_j =0$ and
  $\{v_{\ge_j}\}_j$ with $v_{\ge_j} \in X_{\ge_j}^d \setminus
  X_{\ge_j}^{d/2}$ satisfying
\begin{displaymath}
  \lim_{j \to+\infty}
  \Gamma_{\ge_j}(v_{\ge_j}) \leq E_{V_0} \quad\hbox{and}\quad
  \lim_{j \to
    +\infty} \Gamma'_{\ge_j}(v_{\ge_j})=0.
\end{displaymath}
Hence Proposition \ref{prop:4.2} applies and provides points
$y_{\ge_j} \in \mathbb{R}^N$, $\bar y \in \mathscr{M}$ and a ground state
$U \in S_{V_0}$ such that
\begin{eqnarray}
&&\lim_{j \to +\infty} |\ge_j y_j - \bar y | = 0 \nonumber \\
&& \lim_{j \to +\infty} \left\| v_{\ge_j} - \varphi_{\ge_j}(\cdot,\cdot - y_j) U(\cdot,\cdot - y_j) \right\|=0. \label{eq:25}
\end{eqnarray}
The definition of $X_{\ge_j}$ implies $\lim_{j \to +\infty}
\mathrm{dist}\, (v_{\ge_j},X_{\ge_j})=0$, and this contradicts the
assumption $v_{\ge_j} \notin X_{\ge_j}^{d/2}$.
\end{proof}
Let now $d>0$ be chosen so that Lemma \ref{lem:5.1} applies.
\begin{proposition} \label{prop:5.2}
For $\ge>0$ sufficiently small, the functional $\Gamma_\ge$
has a critical point $v_\ge \in X_\ge^d \cap \Gamma_\ge^D$.
\end{proposition}
\begin{proof}
  Pick $R_0>0$ so large that $O \subset \left(\{0\} \times
    \mathbb{R}^N \right) \cap B(0,R_0)$ and $\psi_\ge(s) \in H_0^1
  (B(0,R/\ge))$ for any $s \in [0,T]$, $R>R_0$ and $\ge>0$
  sufficiently small. We write $D_\ge = \max_{0 \leq s \leq T}
  \Gamma_\ge(\psi_\ge(s))$.  By Lemma \ref{lem:1}, there exists
  $\mathfrak{a}\in (0,E_{V_0})$ such that, for sufficiently small
  $\ge>0$,
\[
\Gamma_\ge(\psi_\ge(s)) \geq D_\ge -\mathfrak{a} \quad\hbox{implies}\quad
\psi_\ge(s) \in X_\ge^{d/2}\cap H_0^1 (B(0,R/\ge)).
\]
We claim that, for sufficiently small $\ge>0$ and $R>R_0$, there is a
sequence $\{v_n^R\}_n \subset X_\ge^{d/2} \cap \Gamma_\ge^{D_\ge} \cap
H_0^1(B(0,R/\ge))$ such that $\Gamma'_\ge(v_n^R) \to 0$ is
$H_0^1(B(0,R/\ge))$ as $n \to +\infty$.

Arguing by contradiction, we assume that for sufficiently small
$\ge>0$ there exists a number $a_R(\ge)>0$ such that
\[
|\Gamma'_\ge(v)|
\geq a_R(\ge)
\]
on $X_\ge^{d/2} \cap \Gamma_\ge^{D_\ge} \cap
H_0^1(B(0,R/\ge))$. With a slight abuse of notation, we will identify
any $v \in H_0^1(B(0,R/\ge))$ with its extension to $H$ as the null
function outside $B(0,R/\ge)$. Applying Lemma \ref{lem:5.1}, we find a
number $\omega>0$, independent of $\ge>0$, such that $|\Gamma'_\ge(v)|
\geq \omega$ for $v \in \Gamma_\ge^{D_\ge} \cap (X_\ge^d \setminus
X_\ge^{d/2})$. By a classical deformation argument that starts from
$\psi_\ge$, there exist some $\mu \in (0,\mathfrak{a})$ and a path $\psi \in
C([0,T],H)$ satisfying
\[
\psi(s)=\psi_\ge(s) \ \hbox{for $\psi_\ge(s) \in \Gamma_\ge^{D_\ge-\mathfrak{a}}$},
\quad
\psi(s) \in X_\ge^d \ \hbox{for $\psi_\ge(s) \notin \Gamma_\ge^{D_\ge-\mathfrak{a}}$}
\]
and
\begin{equation} \label{eq:21}
\Gamma_\ge(\psi(s)) < D_\ge-\mu \quad\hbox{for every $s \in [0,T]$}.
\end{equation}
Let $\zeta \in C_0^\infty(\mathbb{R}_{+}^{N+1})$ be a cut-off function
such that $\zeta(x,y)=1$ for $0<x<\delta$ and $y \in O^\delta$, $\zeta
(x,y)=0$ for $x \geq 2\delta$ and $y \notin O^{2\delta}$, $\zeta
(\cdot,\cdot) \in [0,1]$, and $|\nabla \zeta| \leq 2/\delta$. For
$\psi(s) \in X_\ge^d$ we denote $\psi_1(s)=\zeta_\ge \psi(s)$ and
$\psi_2(s)=(1-\zeta_\ge)\psi(s)$, where $\zeta_\ge (x,y)=\zeta (\ge x,
\ge y)$. We remark that we understand the dependency on $\ge$ in the
notation of $\psi_1$ and $\psi_2$. Observe that
{\setlength\arraycolsep{2pt}
\begin{eqnarray*}
 \Gamma_\ge(\psi(s)) &=& \Gamma_\ge (\psi_1(s))+\Gamma_\ge(\psi_2(s))
 + Q_\ge(\psi(s)) - Q_\ge(\psi_1(s)) - Q_\ge (\psi_2(s)) \\
 &&{}- \frac{1}{2p} \int_{\mathbb{R}^N}  (I_\alpha * |\gamma(\psi(s))|^p) |\gamma(\psi(s))|^p
 \\
 &&{}+ \frac{1}{2p} \int_{\mathbb{R}^N} (I_\alpha * |\gamma(\psi_1(s))|^p) |\gamma(\psi_1(s))|^p \\
 &&{}+ \frac{1}{2p} \int_{\mathbb{R}^N} (I_\alpha * |\gamma(\psi_2(s))|^p) |\gamma(\psi_2(s))|^p.
\end{eqnarray*}
}
The elementary inequality $(h+k-1)_{+} \geq (h-1)_{+} + (k-1)_{+}$ valid for $h \geq 0$ and $k \geq 0$
immediately implies that
\[
 Q_\ge(\psi(s)) \geq Q_\ge(\psi_1(s)) + Q_\ge (\psi_2(s))
\]
and, similarly to (\ref{eq:13}), we find that
\begin{equation} \label{eq:18}
 \int_{\mathbb{R}^N \setminus O_\ge} |\gamma(\psi(s))|^2\, dy \leq C \ge^{6/\mu}.
\end{equation}
On the other hand, writing $\kappa = (I_\alpha * |\gamma(\psi(s))|^p) |\gamma(\psi(s))|^p - (I_\alpha * |\gamma(\psi_1(s))|^p) |\gamma(\psi_1(s))|^p
-(I_\alpha * |\gamma(\psi_2(s))|^p) |\gamma(\psi_2(s))|^p$ for simplicity,
{\setlength\arraycolsep{2pt}
\begin{eqnarray*}
 \int_{\mathbb{R}^N} \kappa &=& 2 \int_{O_\ge^{2\delta} \times (\mathbb{R}^N \setminus O_\ge^{2\delta})} (I_\alpha * |\gamma(\psi(s))|^p)|\gamma(\psi(s))|^p \\
&&{} -2 \int_{(O_\ge^{2\delta} \setminus O_\ge^\delta) \times (\mathbb{R}^N \setminus O^\delta)} (I_\alpha * |\gamma(\psi(s))|^p)|\gamma(\psi(s))|^p
\end{eqnarray*}
}
and from (\ref{eq:18}) via interpolation we deduce that
{\setlength\arraycolsep{2pt}
\begin{eqnarray}
 \lim_{\ge \to 0} \int_{O_\ge^{2\delta} \times (\mathbb{R}^N \setminus O_\ge^{2\delta})} (I_\alpha * |\gamma(\psi(s))|^p)|\gamma(\psi(s))|^p &=& 0 \label{eq:19} \\
 \lim_{\ge \to 0} \int_{(O_\ge^{2\delta} \setminus O_\ge^\delta) \times (\mathbb{R}^N \setminus O_\ge^\delta)} (I_\alpha * |\gamma(\psi(s))|^p)|\gamma(\psi(s))|^p &=& 0 \label{eq:20}.
\end{eqnarray}
}
Equations (\ref{eq:19}) and (\ref{eq:20}) yield
{\setlength\arraycolsep{2pt}
\begin{eqnarray*}
\lim_{\ge \to 0}
&&\int_{\mathbb{R}^N}  (I_\alpha * |\gamma(\psi(s))|^p) |\gamma(\psi(s))|^p
 -  \int_{\mathbb{R}^N} (I_\alpha * |\gamma(\psi_1(s))|^p) |\gamma(\psi_1(s))|^p \\
 &&\quad{}-  \int_{\mathbb{R}^N} (I_\alpha * |\gamma(\psi_2(s))|^p) |\gamma(\psi_2(s))|^p =0,
\end{eqnarray*}
}
and hence, as $\ge \to 0$,
\[
 \Gamma_\ge(\psi(s)) \geq \Gamma_\ge(\psi_1(s)) + \Gamma_\ge(\psi_2(s)) + o(1).
\]
By similar arguments,
\begin{multline*}
 \Gamma_\ge(\psi_2(s))  \\
 \geq -\frac{1}{2p} \int_{(\mathbb{R}^N \setminus O_\ge) \times (\mathbb{R}^N \setminus O_\ge)}
 I_\alpha (x-y)
 |\gamma(\psi_2(s)(x)|^p |\gamma(\psi_2(s)(y)|^p\, dx\, dy \geq o(1),
\end{multline*}
and we finally conclude that
\begin{displaymath}
 \Gamma_\ge(\psi(s)) \geq \Gamma_\ge(\psi_1(s))+o(1)
\end{displaymath}
as $\ge \to 0$. If we define
\[
 \psi_1^1(s)(x,y) = \left\{
 \begin{array}{ll}
  \psi_1(s)(x,y) &\hbox{if $x>0$ and $y \in O^{2\delta}$} \\
  0 &\hbox{if $x>0$ and $y \notin O^{2\delta}$},
 \end{array}
\right.
\]
we immediately see that $\Gamma_\ge(\psi_1(s)) \geq
\Gamma_\ge(\psi_1^1(s))$, and $\psi_1^1 \in \Phi_\ge$ because
$0<\mathfrak{a} < E_{V_0}$. Now \cite[Proposition 3.4]{CZR} implies
that, as $\ge \to 0$,
\[
 \max_{0 \leq s \leq T} \Gamma_\ge (\psi(s)) \geq E_{V_0} + o(1),
\]
and this contradicts (\ref{eq:21}).

For a fixed $\ge$ sufficiently small and for $R \gg1$, we consider a
sequence $\{v_n^R\}_n \subset X_\ge^{d/2} \cap \Gamma_\ge^{D_\ge} \cap
H_0^1(B(0,R/\ge))$ such that $\Gamma'_\ge(v_n^R) \to 0$ is
$H_0^1(B(0,R/\ge))$ as $n \to +\infty$. The boundedness of
$\{v_n^R\}_n$ in $H_0^1(B(0,R/\ge))$ and the Sobolev embedding theorem
imply that $v_n^R \to v^R$ strongly in $L^q(B(0,R/\ge))$ for any
$q<2N/(N-1)$. Since $\{v_n^R\}_n$ is a Palais-Smale sequence, a
standard argument shows that $v_n^R \to v^R$ strongly in
$H_0^1(B(0,R/\ge))$. Hence the limit $v^R$ is a weak solution to the
problem
\begin{displaymath}
  -\Delta v^R + m^2 v^R =0 \quad\hbox{in $B \left( 0,\frac{R}{\ge} \right)$}
\end{displaymath}
with
{\setlength\arraycolsep{2pt}
\begin{eqnarray*}
    -\frac{\partial v^R}{\partial x}(0,y) &=& -V_\ge(y)v^R(0,y) +
      \left( I_\alpha * |v^R(0,\cdot)|^{p} \right) |v^R(0,y)|^{p-2}v^R(0,y) + \nonumber \\
      &&\quad{}+(2p+1) \left( \int_{\mathbb{R}^N} \chi_\ge \gamma(v^R)^2 \, dy -1 \right)_{+}^{\frac{2p-1}{2}}
      \chi_\ge v^R(0,y)
\end{eqnarray*}
}
for $y \in \mathbb{R}^N$ with $|y| = R/\ge$.

Since $v^R \in X_\ge^{d} \cap \Gamma_\ge^{D_\ge} \cap
H_0^1(B(0,R/\ge))$, we deduce that both $\{\|v^R\|\}_R$ and
$\{\Gamma_\varepsilon(v^R)\}_R$ are uniformly bounded for
$\varepsilon>0$ sufficiently small. Hence also
$\{Q_\varepsilon(v^R)\}_R$ is uniformly bounded for $\varepsilon>0$
sufficiently small.  Now a Moser iteration scheme like \cite[Theorem
3.2]{CZN1} yields that $\{v^R\}_R$ is bounded in $L^\infty$ uniformly
for $\ge>0$ sufficiently small. Taking into account that
$\{Q_\varepsilon(v^R)\}_R$ is uniformly bounded in $L^\infty$ and
\[
 \left( I_\alpha * |v^R(0,\cdot)|^{p} \right) |v^R(0,y)|^{p-1} \leq \frac{1}{2}(V_\ge + m)  |v^R(0,y)|
\]
when $|y| \geq 2R/\ge$, we can perform a comparison argument as in \cite[Theorem 5.1]{CZN1} and derive 
\begin{displaymath}
|v^R(x,y)| \leq C e^{-m (\sqrt{x^2+|y|^2}-2R_0)}.
\end{displaymath}
We assume, without loss of generality, that $\{v^R\}_R$ weakly converges to some $v_\ge$ in $H$ as $R \to +\infty$
that solves
\begin{equation} \label{eq:22}
  -\Delta v_\ge + m^2 v_\ge =0 \quad\hbox{in $\mathbb{R}_{+}^{N+1}$}
\end{equation}
with
{\setlength\arraycolsep{2pt}
\begin{eqnarray} \label{eq:23}
    -\frac{\partial v_\ge}{\partial x}(0,y) &=& -V_\ge(y)v_\ge(0,y) +
      \left( I_\alpha * |v_\ge(0,\cdot)|^{p} \right) |v_\ge(0,y)|^{p-2}v_\ge(0,y) + \nonumber \\
      &&\quad{}+(2p+1) \left( \int_{\mathbb{R}^N} \chi_\ge \gamma(v_\ge)^2 \, dy -1 \right)_{+}^{\frac{2p-1}{2}}
      \chi_\ge v_\ge(0,y)
\end{eqnarray}
}
for $y \in \mathbb{R}^N$.
\end{proof}
\section{Proof of the Theorem \ref{th:main}}

We can now collect all the results of the previous section to prove
our main existence theorem. To begin with, Proposition \ref{prop:5.2}
gives us a number $\ge_0>0$ such that, for $0<\ge<\ge_0$, the
penalized functional $\Gamma_\ge$ possesses a critical point $v_\ge
\in X_\ge^d \cap \Gamma_\ge^{D_\ge}$. As in the proof of
Proposition~\ref{prop:comp}, we have $v_\ge \in \bigcap_{q>2}
L^q(\mathbb{R}_{+}^{N+1})$, and $\{v_\ge\}$ is bounded
$L^\infty([0,+\infty) \times \mathbb{R}^N)$. By the results of
Proposition \ref{prop:4.2},
\[
 \lim_{\ge \to 0} \int_{\mathbb{R}_{+}^{N+1} \setminus \left( [0,+\infty) \times (\mathscr{M}^{2\beta})_\ge\right)}
 \left( |\nabla v_\ge|^2 + V_\ge |v_\ge|^2 \right) \, dx\, dy =0.
\]
It now follows that
\[
 \lim_{\ge \to 0} \sup_{(x,y) \in \mathbb{R}_{+}^{N+1} \setminus \left( [0,+\infty) \times (\mathscr{M}^{2\beta})_\ge\right)}
 |v_\ge(x,y)| =0,
\]
and as in the last step of the previous section we deduce an
exponential decay of the trace~$u_\ge$ away from $\mathbb{R}^N \setminus (\mathscr{M}^{2\beta})_\ge$:
\[
 |u_\ge(y)| \leq C_1 \exp \left(
 -C_2 \mathop{\mathrm{dist}} \left( y, (\mathscr{M}^{2\beta})_\ge \right)
 \right).
\]
%$v_\ge$ away from $\mathbb{R}_{+}^{N+1} \setminus
%\left( [0,+\infty) \times (\mathscr{M}^{2\beta})_\ge\right)$:
%\[
% |v_\ge(x,y)| \leq C_1 \exp \left(
% -C_2 \mathop{\mathrm{dist}} \left( x, (\mathscr{M}^{2\beta})_\ge \right)
% \right).
%\]
Taking $\ge$ smaller, this estimate implies that $Q_\ge(v_\ge)=0$, and (\ref{eq:22})-(\ref{eq:23}) are
the local Neumann problem in the half space $\R^N$ corresponding to the nonlocal problem
(\ref{eq:1bis}). The conclusion now follows by reversing the local realization of the operator $\sqrt{-\Delta +m^2}$. Recalling (\ref{eq:25}) and all the scalings, we immediately deduce (\ref{eq:26}). This completes the proof.
\end{proof}

\section*{Acknowledgements}
The first author is partially supported by GNAMPA-INDAM Project 2014
\emph{Aspetti differenziali e geometrici nello studio di problemi
  ellittici quasilineari.}  The second author is partially supported
by the FIRB 2012 project \emph{Dispersive equations and Fourier
  analysis} and by the PRIN 2012 project \emph{Critical point theory
  and perturbative methods for nonlinear differential equations}.
  
The authors wish to express their gratitude to the anonymous referee for their important remarks.

\bigskip
\bigskip

\end{document}